\documentclass[11pt,a4paper]{article}
\usepackage[utf8]{inputenc}
\usepackage[english]{babel}
\usepackage[T1]{fontenc}
\usepackage{lmodern}
\usepackage{amsmath}
\usepackage{accents}
\usepackage{amsfonts}
\usepackage{amssymb}
\usepackage{amsthm}
\usepackage{amscd}
\usepackage{url}
\usepackage{enumitem}
\usepackage{hhline}
\usepackage{dsfont}
\usepackage{array}
\usepackage{geometry}
\usepackage{tikz}
\usetikzlibrary{decorations.pathreplacing}
\usetikzlibrary{patterns}
\usepackage{xcolor}
\usetikzlibrary{arrows}
\usepackage{mdframed}
\usepackage{caption}
\usepackage[titletoc]{appendix}
\usepackage{csquotes}
\usepackage{hyperref}
\usepackage{float}
\usepackage{bm}
\usepackage{microtype}
\usepackage{graphicx}

\mdfsetup{nobreak=true}

\AtBeginDocument{
  \label{CorterrectFirstPageLabel}
  
}

\usepackage[backend=biber, doi=false,isbn=false,url=false]{biblatex}
\addbibresource{article.bib}

\geometry{hmargin=2.5cm,vmargin=2.5cm}

\newcolumntype{L}{>{$}l<{$}}

\newcommand\N{\mathbb{N}}
\newcommand\Nn{\mathcal{N}}
\newcommand\M{\mathcal{M}}

\newcommand\J{\mathcal{J}}

\newcommand\R{\mathbb{R}}
\newcommand\NN{\mathsf{N}}
\newcommand\oNN{\overline{\mathsf{N}}}

\newcommand\x{\underline{x}}

\newcommand\z{\underline{z}}
\newcommand\w{\underline{w}}

\newcommand\n{\underline{n}}
\newcommand\m{\underline{m}}
\newcommand\p{\underline{p}}

\newcommand\y{\underline{y}}

\newcommand\uphi{\underline{\phi}}

\newcommand\Ss{\mathcal{S}}
\newcommand\E{\mathbb{E}}
\newcommand\Id{\mathrm{Id}}
\DeclareMathOperator\Var{Var}
\DeclareMathOperator\supp{supp}

\newcommand\dd{\mathrm{d}}
\newcommand\CC{\mathcal{C}}

\newcommand\B{\mathcal{B}}
\newcommand\G{\mathcal{G}}
\newcommand\I{\mathcal{I}}

\newcommand\PP{\mathbb{P}}
\newcommand\Pt{\mathcal{P}}

\DeclareMathOperator\Law{\mathrm{Law}}

\DeclareMathOperator\Cov{Cov}
\newcommand\Card{\mathrm{Card}}

\newcommand\T{\mathbb{T}}
\DeclareMathOperator\Tr{Tr}

\newcommand{\enstq}[2]{\left\{#1~\middle|~#2\right\}}

\DeclareMathOperator\sinc{sinc}

\newcommand\one{\mathds{1}}

\newcommand\numberthis{\addtocounter{equation}{1}\tag{\theequation}}
\newcommand\jump{\par\medskip}
\newcommand\quand{\quad\text{and}\quad}

\theoremstyle{plain}
\newtheorem{theo}{Theorem}[section]
\newenvironment{theorem}%
  {\begin{mdframed}[backgroundcolor=white]\begin{theo}}%
  {\end{theo}\par\vspace{0.1cm}\end{mdframed}}
  
\theoremstyle{plain}
\newtheorem{coro}[theo]{Corollary}
\newenvironment{corollary}%
  {\begin{mdframed}[backgroundcolor=white]\begin{coro}}%
  {\end{coro}\par\smallskip\end{mdframed}}
  
\theoremstyle{plain}
\newtheorem{lemm}[theo]{Lemma}
\newenvironment{lemma}%
  {\begin{mdframed}[backgroundcolor=white]\begin{lemm}}%
  {\end{lemm}\par\vspace{0.cm}\end{mdframed}}

\theoremstyle{plain}
\newtheorem{prop}[theo]{Proposition}
\newenvironment{proposition}%
  {\begin{mdframed}[backgroundcolor=white]\begin{prop}}%
  {\end{prop}\par\vspace{0.cm}\end{mdframed}}

\theoremstyle{definition}
\newtheorem{remark}[theo]{Remark}

\makeatletter
\def\blfootnote{\gdef\@thefnmark{}\@footnotetext}
\makeatother
  
\makeatletter
\renewcommand*\env@matrix[1][*\c@MaxMatrixCols c]{%
  \hskip -\arraycolsep
  \let\@ifnextchar\new@ifnextchar
  \array{#1}}
\makeatother

\begin{document}

\title{\Huge{Cumulants asymptotics for the zeros counting measure of real Gaussian processes}}
\author{Louis Gass\\\strut\\Email : louis.gass@ens-rennes.fr}
\maketitle
\blfootnote{Univ Rennes, CNRS, IRMAR - UMR 6625, F-35000 Rennes, France.}
\blfootnote{This work was supported by the ANR grant UNIRANDOM, ANR-17-CE40-0008.}
\blfootnote{Email: louis.gass(at)ens-rennes.fr}

\begin{abstract} We compute the exact asymptotics for the cumulants of linear statistics associated with the zeros counting measure of a large class of real Gaussian processes. Precisely, we show that if the underlying covariance function is regular and square integrable, the cumulants of order higher than two of these statistics asymptotically vanish. This result implies in particular that the number of zeros of such processes satisfies a central limit theorem. Our methods refines the recent approach by M. Ancona and T. Letendre and allows us to prove a stronger quantitative asymptotics, under weaker hypotheses on the underlying process. The proof exploits in particular the elegant interplay between the combinatorial structures of cumulants and factorial moments in order to simplify the determination of the asymptotics of nodal observables. The class of processes addressed by our main theorem includes as motivating examples random Gaussian trigonometric polynomials, random orthogonal polynomials and the universal Gaussian process with sinc kernel on the real line, for which the asymptotics of higher moments of the number of zeros were so far only conjectured. 
\end{abstract}

\renewcommand\contentsname{} 

\begingroup
\let\clearpage\relax
\vspace{-1cm} 
\setcounter{tocdepth}{2}
\tableofcontents
\endgroup


\newpage
\section{Introduction}
The study of the number of zeros of smooth Gaussian processes has a long history and is in particular motivated by the pioneering works of Kac and Rice, see e.g. \cite{Aza09} for a general introduction to this topic. The asymptotics for the expectation and the variance of the number of zeros of a stationary Gaussian process on an interval growing interval $[0,R]$ as $R$ grows to infinity has been known since \cite{Cuz76}, where a central limit theorem (CLT) for the number of zeros is also proved. The variance asymptotics is there established  using the celebrated Kac--Rice method and the CLT is proved using approximation by an $m$-dependent process.\jump

With similar methods, the variance of the number of zeros of random Gaussian trigonometric polynomials with large degree has been studied in \cite{Gra11}, as well as the associated CLT. Later on, the machinery of Wiener chaos expansion was then successfully used in order to compute the variance asymptotics as well as establishing CLTs for the number of zeros of various models of stochastic processes, see for instance \cite{Aza13,Arm21,Do21}. Central limit theorems for the number of real roots of random algebraic polynomials have also been investigated, see for example \cite{Ngu21} and the references therein.\jump

In the recent paper \cite{Gas21}, focusing on the asymptotics of the Kac density rather than on the full integral Kac--Rice formula, the author managed to avoid some of the technical computations inherent to the use of Kac--Rice method. This allowed him to get a unifying point of view, make explicit the needed decorrelation  estimates and then deduce the variance asymptotics  for the number of zeros of many models of Gaussian processes. It has been then conjectured that the same heuristics could be applied to treat the asymptotics of the higher central moments of the number of zeros of a Gaussian process, which is the goal of the present paper.\jump

Up to now, very few results about the asymptotics of higher central moments are known. The best result so far is the one by M. Ancona and T. Letendre  \cite{Anc21}, where it has been proved that the $p$-th central moment, when properly rescaled, converges towards the $p$-th moment of a Gaussian random variable, under restrictive condition that the covariance function and their derivatives decreases faster than $x^{-4p}$. This last result then yields another proof of the CLT for the number of zeros by the method of moments, for the processes whose covariance function is in the Schwartz class of regular and rapidly decreasing functions. Their proof is based on a series of articles \cite{Anc19, Anc20} whose purpose was to tackle the CLT for the number of roots of Kostlan polynomials and its real algebraic extension. Note that the \textit{sinc process}, i.e. the Gaussian process with $\sinc$ covariance function, which plays a central role in probability theory and mathematical physics, is ruled out from their framework, due to the slow decay of the $\sinc$ kernel. In the more general context of point processes, higher moments of geometric statistics have also been studied under the hypothesis of fast decreasing correlation \cite{Bla19}.\jump

In this paper, we prove the exact asymptotics of the higher central moments of the number of zeros of a large class of Gaussian processes, under the only hypothesis, apart from regularity, that the covariance function as well as its derivatives are square integrable. Our results apply in particular to Gaussian trigonometric and orthogonal polynomials, as well as the stationary process with $\sinc$ kernel and other Gaussian stationary processes on the real line with possibly slow decaying kernels. We prove in fact a more general theorem by computing the exact asymptotics of the cumulants of linear statistics associated with the zeros counting measure of the underlying processes. The use of cumulants instead of central moments simplifies the rather intricate combinatorics involved when estimating higher order moments via the Kac--Rice method.\jump

Our result in turn implies the convergence of associated moments of any order and thus a CLT, with an exact rate of convergence. As a corollary, we deduce a polynomial concentration of any order of the number of zeros, and by a Borel--Cantelli argument, the almost sure convergence of the number of zeros. Note that these last facts cannot be deduced from chaos expansion methods. More generally, in the context of linear statistics, we prove the almost sure equidistribution of the zeros set at the limit for a large class of smooth Gaussian processes.

\subsection{Statement of the main results}
\subsubsection{Cumulants asymptotics and central limit theorems}
In the following, all the random variables considered are defined on a common abstract probability space $(\Omega, \mathcal F, \mathbb P)$ and $\mathbb E$ will denote the associated expectation. In the sequel, $W$ stands for a standard Gaussian random variable, i.e. centered with unit variance. We denote by $\kappa_p(Z)$ the $p$-th cumulant of a random variable $Z$, given by the expression
\[\kappa_p(Z) = \sum_{\I\in\Pt_p} (-1)^{|\I|-1}(|\I|-1)!\prod_{I\in\I}\E\left[Z^{|I|}\right],\numberthis\label{eqter:69}\]
where the sum is indexed by the set $\Pt_p$ of all the partitions of the finite set $\{1,\ldots,p\}$. We refer to \cite{Spe83, Pec11} and the paragraph \ref{subsecter4} below for more details on the cumulants of a random variable. \jump

In the following, iid stands for independent and identically distributed. The following theorem describes the asymptotics of all the cumulants of the number of zeros of a Gaussian trigonometric polynomial with independent coefficients.
\begin{theorem}
\label{Theoremter1}
Let $(a_k)_{k\geq 0}$ and $(b_k)_{k\geq 0}$ be two iid sequences of standard Gaussian variables. Let $Z_n$ be the number of zeros on $[0,2\pi]$ of the process
\[h_n(x) := \frac{1}{\sqrt{n}}\sum_{k=0}^{n-1}a_k\cos(kx) + b_k\sin(kx).\]
For $p$ a positive integer, there is an explicit finite constant $\gamma_p$ such that
\[\lim_{n\rightarrow+\infty}\frac{\kappa_p(Z_n)}{n} = \gamma_p.\]
\end{theorem}
\smallskip
The constants $\gamma_1$ and $\gamma_2$ are positive (i.e. $>0$). The above theorem implies in particular that
\[\lim_{n\rightarrow+\infty}\frac{\Var(Z_n)}{n}= \gamma_2 \quand\forall p\geq 3,\quad \frac{\kappa_p(Z_n)}{n^{p/2}} =O\left(\frac{1}{n^{\frac{p}{2}-1}}\right).\numberthis\label{eqter:70}\]
Given the expression of the central moments in terms of cumulants and the fact that the cumulants of a Gaussian random variable are zeros for $p\geq 3$, the asymptotics \eqref{eqter:70} imply in fact that for every positive integer $p$,
\[\E\left[ \left(\frac{Z_n-\E[Z_n]}{\sqrt{\Var(Z_n)}}\right)^p\right] = \E[W^p] + O\left(\frac{1}{\sqrt{n}}\right).\numberthis\label{eqter:55}\]
Note that the exact asymptotics of the cumulants given by Theorem \ref{Theoremter1} is in nature stronger than the cruder bound given by \eqref{eqter:70} and thus the central moment asymptotics \eqref{eqter:55}. \jump

As a consequence, we are able to reprove the central limit theorem for the number of zeros, as well as a polynomial concentration to any order of the number of zeros around its mean by Markov inequality.
\begin{corollary}
\label{Corter4}
As $n$ goes to infinity, we have the convergence in distribution
\[\frac{Z_n-\E[Z_n]}{\sqrt{\Var(Z_n)}} \underset{n\rightarrow+\infty}{\overset{d}{\longrightarrow}} \mathcal{N}(0,1).\]
For all $p\geq 2$, there is a constant $C_p$ such that for all integer $n$ and positive constant $\omega$,
\[\PP\left(\left|Z_n-\E[Z_n]\right|\geq n\omega\right)\leq \frac{C_p}{(\sqrt{n}\omega)^p}.\]
\end{corollary}

Note that the variance estimate in Equation \eqref{eqter:70} and the associated CLT were first established in \cite{Gra11} by the Kac--Rice method and in \cite{Aza13} by the Wiener chaos expansion. So far the exact asymptotics of the $p$-th central moment or cumulants of $Z_n$ has never been computed for $p\geq 3$. Theorem \ref{Theoremter1} shows that it asymptotically behaves like the $p$-th moment of a Gaussian random variable, which is expected from the already existing central limit theorem for the random variable $Z_n$. The polynomial concentration of the number of zeros and a Borel--Cantelli argument implies the almost sure convergence
\[\lim_{n\rightarrow +\infty} \frac{Z_n}{n} = \gamma_1\quad\text{a.s},\]
a result already known from \cite{Ang21}, using a derandomization method. Exponential concentration has been established in \cite{Ngu19} for this particular model but the proof is of very different nature and strongly use the trigonometric nature of the random process $h_n$. Our proof only uses the fact that the process is of class $\CC^\infty$ and is adaptable to many other models.\jump 

The error term in \eqref{eqter:55} is new and implies a rate of convergence towards the Gaussian random variable of order $1/\sqrt{n}$ for the moment metric. It is reminiscent of the Berry--Essen bound for the more classical CLT. Note that the Wiener chaos expansion method can yield (slower) speeds of convergence for more classical distances, namely Kolmogorov or Wasserstein. \jump

The independence hypothesis above on the Gaussian random coefficients can be relaxed. Namely, we can extend the previous Theorem \ref{Theoremter1} to the case where the Gaussian sequences $(a_k)_{k\geq 0}$ and $(b_k)_{k\geq 0}$ are independent and stationary.
\begin{theorem}
\label{Theoremter2}
Let $(a_k)_{k\geq 0}$ and $(b_k)_{k\geq 0}$ be two independent sequences of standard Gaussian variables, such that
\[\E[a_ka_l] = \E[b_kb_l] = \rho(k-l).\]
We assume that the spectral measure associated with the correlation function $\rho$ has a continuous positive density on the torus $\T$. Let $Z_n$ be the number of zeros on $[0,2\pi]$ of the process
\[h_n(x) := \frac{1}{\sqrt{n}}\sum_{k=0}^{n-1}a_k\cos(kx) + b_k\sin(kx).\]
Then the conclusion of Theorem \ref{Theoremter1} and its Corollary \ref{Corter4} holds.
\end{theorem}

The expectation of the number of zeros in this model has been studied in \cite{Ang19,Ang21b} and the variance in \cite{Gas21}. The above Theorem \ref{Theoremter2} gives the asymptotics of every cumulant and therefore, as discussed above in the independent case, it proves a central limit theorem for the number of zeros, which is a new result in this dependent framework, as well as concentration around the mean and a quantification of the rate of convergence.\jump

In another direction, one can replace the functions $\cos$ and $\sin$ by more general functions. A standard framework is then the following model of random orthogonal polynomials, for which we can give a similar statement.
\begin{theorem}
\label{Theoremter3}
Let $(a_k)_{k\geq 0}$ be an iid sequence of standard Gaussian variables. Let $(P_k)_{k\geq 0}$ be a sequence of orthogonal polynomials associated with a measure $\mu$ on the line, and let $[a',b']$ be an interval. We assume that the measure $\mu$ and the interval $[a',b']$ satisfies the hypotheses of \cite[Thm.~1.1]{Do21}. Let $Z_n$ be the number of zeros on $[a',b']$ of the process
\[h_n(x) := \frac{1}{\sqrt{n}}\sum_{k=0}^{n-1}a_kP_k(x).\]
Then the conclusion of Theorem \ref{Theoremter1} and its Corollary \ref{Corter4} holds.
\end{theorem}

The expectation, the variance and a central limit theorem for this model have been very recently studied with the Wiener chaos expansion method in \cite{Do21}. Here again, we extend this result by determining the asymptotics of higher cumulants and thus higher moments. As already discussed after  the previous statements above, from the cumulants asymptotics established in Theorem \ref{Theoremter3}, we can also deduce concentration around the expected number of zeros and as well as a rate of convergence in the associated CLT for the (non-standard) metric of moments.\jump

At last, we extend known results about the number of zeros of a stationary Gaussian process on a growing interval, establishing in particular a CLT under the sole square integrability of the associated correlation function and its derivatives.
\begin{theorem}
\label{Theoremter8}
Let $f$ be a stationary Gaussian process with $\CC^\infty$ paths and covariance function $r$.
For $R>0$ we define $Z_R$ to be the number of zeros on $[0,R]$ of the process $f$. \begin{itemize}
\item If the covariance function $r$ and its derivatives are in $L^2(\R)$, then for all $p\geq 2$
\[\lim_{R\rightarrow +\infty} \E\left[ \left(\frac{Z_R-\E[Z_R]}{\sqrt{\Var(Z_R)}}\right)^p\right] = \E[W^p].\]
\item If the covariance function $r$ and its derivatives are in $L^q(\R)$ for all $q> 1$, then for $p$ a positive integer, there is an explicit finite constant $\gamma_p$ such that
\[\lim_{R\rightarrow+\infty}\frac{\kappa_p(Z_R)}{R} = \gamma_p.\]
\end{itemize}
\end{theorem}

As mentioned above, the CLT which is obtained from the above moments asymptotics by the method of moments is already known in the particular case of stationary Gaussian processes with covariance function belonging to the Schwartz class, see \cite{Anc21}. Here the assumption on the decay of the correlation function is greatly relaxed and we only need to assume the square integrability of the covariance kernel as well as its derivatives. It implies a polynomial concentration around the mean to any order for the number of zeros, which appears to be a new result for regular process with slow decaying covariance functions (note that exponential concentration has been proved in \cite{Bas20} under some analyticity assumption).\jump

As a particular and representative case, Theorem \ref{Theoremter8} covers the example of the stationary Gaussian process $f$ with $\sinc$ kernel, which is a completely new result. This process plays a central role in the study of determinantal point processes, and appears as the limit of the local statistics of various random models, from eigenvalues of random matrices to random integer partitions. For this particular process, the asymptotic of the expectation and the variance of $Z_R$, as well as a CLT were known since the pioneering works of \cite{Cuz76} and the references therein. But so far, the exact asymptotics of higher central moments or cumulants of $Z_R$ remained unknown.\jump

Under the stronger hypothesis that the covariance function and its derivatives are in $L^p$ for all $p>1$, we deduce the exact asymptotic of the cumulants of any order. This integrability hypothesis in particular holds true for processes whose covariance functions $r$ and their derivatives satisfy the bound
\[\forall x\in\R,\quad r^{(u)}(x)\leq \frac{C_u}{1+|x|},\]
which is the case for a stationary Gaussian process with $\sinc$ covariance function.
\subsubsection{A more general and unifying  statement}
In fact Theorems \ref{Theoremter1}, \ref{Theoremter2}, \ref{Theoremter3} and \ref{Theoremter8} are all corollaries of a single, more general statement given below. In order to state it, we need to introduce first a few notations that will be used for the rest of the paper.\jump

Let $U$ be a non-empty open interval of the real line $\R$ or of the one-dimensional torus $\T$, endowed with their canonical distance $|\,.\,|$. Let $n\in\R_+^*\cup\{+\infty\}$. If $n$ is finite then $nU$ is a non-empty open subset of $\R$ or of the one-dimensional torus $n\T$ of length $n$. For $n=+\infty$ we use the convention $(+\infty)U = \R$. This setting allows us to give a unified exposition for processes defined on the torus (e.g. random trigonometric polynomials) and on the real line (e.g. the sinc process).\jump

Let $\NN$ be an unbounded subset of $\R_+^*$ and $\oNN = \NN\sqcup\{+\infty\}$.  For each $n\in\oNN$, we consider a centered Gaussian process $f_n$ defined on $nU$, and we assume that the process $f_\infty$ is a non-zero stationary centered process on $\R$. Note that for $n\in\NN$ the process $h_n = f_n(n\,.\,)$ is a Gaussian process on $U$. For $n\in \oNN$ and $s,t\in nU$ we define the covariance function
\[r_n(s,t) = \E[f_n(s)f_n(t)]\quand r_\infty(s-t) = r_\infty(s,t).\]
If the process $f_n$ is of class $\CC^k(U)$ for $k\geq 0$ then the covariance function $r_n$ is also of class $\CC^k$ in each variable, and one has for $u,v\leq k$ and $x,y\in nU$
\[r_n^{(u,v)}(x,y) = \E[f_n^{(u)}(x)f_n^{(v)}(y)].\]
For $n\in\NN$ we define
\[Z_n = \enstq{x\in U}{f_n(nx) = 0}\quand \nu_n:= \sum_{x\in Z_n}\delta_x,\numberthis\label{eqter:52}\]
the random counting measure on $Z_n$. Note that $(\nu_n)_{n\in\NN}$ is a family of measures on $U$. Assume for now (it will be a consequence of Bulinskaya Lemma) that for each $n\in\NN$ the set $Z_n$ is almost surely locally finite. For a bounded function $\phi:U\rightarrow\R$, with compact support in $U$, we define the bracket
\[\langle \nu_n, \phi\rangle = \sum_{x\in Z_n} \phi(x).\]
For instance, if $U=\R$ and $\phi = \one_{[0,1]}$ then 
\[\langle \nu_n, \phi\rangle = \Card \enstq{x\in [0,n]}{f_n(x) = 0}.\]
Note the Kac--Rice formula implies that in expectation, the counting measure has a density with respect to the Lebesgue measure. It means that one can compute, when it is defined, expectations of linear statistics for test functions defined almost everywhere.
For $q\geq 1$ we define the two following hypotheses.
\begin{itemize}
\item[$\bullet$] $H_1(q)\,:$ The sequence of processes $(f_n)_{n\in\oNN}$ is of class $\CC^q(U)$, and there is a uniformly continuous function $\psi$ on $U$, bounded below and above by positive constants, such that for $u,v\leq q$, the following convergence holds uniformly for $x\in U$ and locally uniformly for $s,t\in \R$,
\[\lim_{n\rightarrow +\infty} r_n^{(u,v)}(nx+s,nx+t) = \psi(x) r_\infty^{(u,v)}(s,t).\numberthis\label{eqter:64}\]
\item[$\bullet$] $H_2(q)\,:$ There is a function $g$, even, bounded and going to zero near infinity, such that for $u,v\leq q$, $n\in \oNN$ and $s,t\in nU$,
\[|r_n^{(u,v)}(s,t)|\leq g(s-t),\numberthis\label{eqter:65}\]
and for some positive constant $\omega$ the function $g_\omega$ is in $L^2(\R)$, where
\[g_\omega :x\;\mapsto \;\sup_{|u|\leq \omega} g(x+u).\]
\end{itemize}
\begin{theorem}
\label{Theoremter9}
Let $p\geq 2$ and $q=2p-1$. We assume that the sequence of processes $(f_n)_{n\in\oNN}$ satisfies hypotheses $H_1(q)$ and $H_2(q)$ defined above. Then for every function $\phi\in L^1\cap L^{p^2}(U)$,
\[\lim_{n\rightarrow+\infty} \E\left[\left(\frac{\langle \nu_n, \phi\rangle - \E\left[\langle \nu_n, \phi\rangle\right]}{\sqrt{\Var\left(\langle \nu_n, \phi\rangle\right)}}\right)^p\right] = \E[W^p].\]
Assume moreover that $g_\omega\in L^{\frac{p}{p-1}}(\R)$. Then there is an explicit constant $\gamma_p$ depending only on the process $f_\infty$, such that
\[\lim_{n\rightarrow+\infty} \frac{\kappa_p(\langle\nu_n,\phi\rangle)}{n} = \gamma_p\,.\,\left(\int_{U} \phi(x)^p\dd x\right).\]
\end{theorem}

The assumption $H_1(q)$ characterizes the convergence of the family of processes $(f_n)_{n\in\NN}$ towards a limit stationary process in $\CC^q$ norm.  This hypothesis is natural and arises in many models. For instance the covariance function of random trigonometric polynomials converges towards the $\sinc$ function. The regularity of the process $f_n$ ensures the well-definiteness of the $p$-th moment, see for instance \cite[Thm 3.6]{Aza09}. The convergence towards a non-degenerate stationary process ensures the uniform non-degeneracy on the process $f_n$, as well as the explicit asymptotics for the cumulants.
\jump

The decay assumption in $H_2(q)$ is greatly relaxed compared to the one present in \cite{Anc21}, where the authors require a function $g$ that decrease like $x^{-4p}$ (though they need only to take $q=p-1$ in Theorem \ref{Theoremter9}). Here we show that the asymptotics of higher moments is independent of the rate of decay of the covariance function, and must only satisfy some uniform square integrability condition. The number of finite moments (and their asymptotics) that one can obtain is directly related to the regularity of the process.\jump

Let us briefly now show that the unifying Theorem \ref{Theoremter9} indeed implies the collection of theorems of the previous subsection. First, Theorem \ref{Theoremter1} and \ref{Theoremter2} are a consequence of Theorem \ref{Theoremter9}, by setting $U=\T$, $\NN = \N^*$, $\phi = \one_\T$ and
\[f_n(x) = \frac{1}{\sqrt{n}}\sum_{k=0}^{n-1}a_k\cos\left(\frac{kx}{n}\right) + b_k\sin\left(\frac{kx}{n}\right).\]
Let $\psi$ be the spectral density of the correlation function $\rho$ of the stationary Gaussian sequences $(a_k)_{k\geq 0}$ and $(b_k)_{k\geq 0}$, which is assumed to be continuous and positive on $\T$. Assumptions $H_1(q)$ and $H_2(q)$ are proved for all $q>0$ for this model in the paper \cite{Gas21} with limit process having $\sinc$ covariance function, and 
\[g = \frac{C_\alpha}{1+|x|^\alpha},\]
where the exponent $\alpha$ can be taken in $]1/2,1[$. Note that Theorem \ref{Theoremter1} is a particular case of Theorem \ref{Theoremter2} with $\psi = 1$ (in that case, one can take $\alpha=1$ above).  \jump

Similarly, Theorem \ref{Theoremter3} is a consequence of Theorem \ref{Theoremter9}. Let $\mu$ be a measure with compact support on the real line. We set $U$ a subinterval of $\R$ such that $\mu$ has a positive continuous density on $\overline{U}$. It is proved in \cite{Do21} under mild assumption on the measure $\mu$ that for the model of random orthogonal polynomials with respect to the measure $\mu$, the assumption $H_2(q)$ holds true for all $q>0$ and 
\[g(x) = \frac{C}{1+|x|}.\] 
Let $\omega$ the density of the equilibrium measure of the support of $\mu$, which is continuous and positive on $U$ and $\psi$ the inverse of the density of the measure $\mu$. Then a variation of hypothesis $H_1(q)$ holds true for all $q>0$ with limit process having $\sinc$ covariance function, where \eqref{eqter:64} statement should be instead
\[\lim_{n\rightarrow +\infty} \left(\frac{1}{\omega(x)}\right)^{u+v}r_n^{(u,v)}\left(nx+\frac{s}{\omega(x)},nx+\frac{t}{\omega(x)}\right) = \psi(x) r_\infty^{(u,v)}(s,t).\]
The proof of Theorem \ref{Theoremter9} adapts verbatim to this setting and the conclusion is
\[\lim_{n\rightarrow+\infty}\frac{\kappa_p(\langle\nu_n,\phi\rangle)}{n} = \gamma_p\,.\,\left(\int_{U} \phi(x)^p\omega(x)\dd x\right).\]
Note that if $\supp \mu = [0,1]$, then after a change of variable, the equilibrium measure is simply the Lebesgue measure on the torus $\T$ and hypothesis $H_1(q)$ then exactly holds true.\jump

At last, Theorem \ref{Theoremter8} is again a consequence of Theorem \ref{Theoremter9} with $U = \R$, $\NN = \R_+^*$, $f_n = f$, and test function $\phi = \one_{[0,1]}$.\jump
\subsubsection{Asymptotics for the linear statistics}
Let $\nu_\infty$ denote the Lebesgue measure on the interval $U$. Theorem \ref{Theoremter9} implies a strong law of large number and a central limit theorem for the sequence of random measure $(\nu_n)_{n\in\NN}$. The two following Corollaries \ref{Corter6} and \ref{Corter7} extend the results of \cite[Sec. 1.4]{Anc21} to our framework, and we refer to this paper for a more thorough discussion.
\begin{corollary}[Law of large numbers]
\label{Corter6}
Assume that the hypotheses $H_1(q)$ and $H_2(q)$ are satisfied for all $q\geq 1$, and either $\NN = \N^*$, or $\NN = \R_+^*\;$  and for $n\in\R_+^*$, $f_n = f_\infty$. Then we have the following almost-sure convergence for the vague topology
\[\lim_{n\rightarrow +\infty} \frac{1}{n}\nu_n = \gamma_1\,\nu_\infty\quad \mathrm{a.s.}\quad.\]
\end{corollary}

Corollary \ref{Corter6} shows that zeros of the process $f_n(n\,.\,)$ tend to be equidistributed on the set $U$ as $n$ goes to $+\infty$. When $\NN = \N^*$, the proof follows from an application of the Borel--Cantelli Lemma. When $\NN = \R_+^*\;$  and $\;\;\forall n\in\R_+^*,\;\;f_n = f_\infty$, we can apply the Borel--Cantelli Lemma to prove the almost sure convergence on a polynomial subsequence. It is then a standard fact that the monotonicity of $Z_n$ ensures the almost sure convergence of the whole sequence. 

\begin{corollary}[Central limit theorem]
\label{Corter7}
Assume that the hypotheses $H_1(q)$ and $H_2(q)$ are satisfied for all $q\geq 1$. Then we have the following convergence in distribution
\[\forall \phi\in L^1\!\cap \!L^\infty(U),\quad\frac{\sqrt{n}}{\gamma_2}\left\langle \left(\frac{1}{n}\nu_n - \gamma_1\nu_\infty\right), \phi\right\rangle \underset{n\rightarrow+\infty}{\sim} \Nn\left(0,\|\phi\|_2^2\right).\]
\end{corollary}

Corollary \ref{Corter7} implies that the fluctuations around the mean of the counting measure $\nu_n$ is comparable to a Gaussian white noise.
\subsection{Outline of the proof}
Before giving a complete and detailed proof of Theorem \ref{Theoremter9}, let us sketch its main ingredients and arguments. The proof follows a similar strategy as in \cite{Anc21} but with considerable refinements. It mainly relies on a careful analysis of the Kac--Rice formula, which asserts that for a test function $\phi$,
\[\E[\langle\nu_n,\phi\rangle^p] = \int_{(nU)^p} \left(\prod_{i=1}^p\phi\left(\frac{x_i}{n}\right)\right)\rho_{p,n}(x_1,\ldots,x_p)\dd x_1\ldots\dd x_p\;+\;\text{extra terms},\numberthis\label{eqter:02}\]
where 
\[\rho_{p,n}(x_1,\ldots,x_p) := T_n(x_1,\ldots,x_p)\E\left[\prod_{i=1}^p|f_n'(x_i)|\,\middle|\,f_n(x_1) = \ldots = f_n(x_p) = 0\right],\numberthis\label{eqter:56}\]
and $T_n(x_1,\ldots x_p)$ is the density at zero of the Gaussian vector $(f_n(x_1),\ldots,f_n(x_p))$. The extra terms appearing in Equation \eqref{eqter:02} are of combinatorial nature and can be treated the exact same way as the first term, so we will omit them in the following heuristics. The function $\rho_{p,n}$ is called the Kac density of order $p$ associated with the process $f_n$. Observe that the function $\rho_{p,n}$ is ill-defined when two of its arguments collapse. This issue is solved by using the technique of divided differences, that appeared in \cite{Cuz75} and was subsequently developed in \cite{Anc19, Anc20, Anc21}. Let us give an example with $p=2$. The idea is to replace in \eqref{eqter:56} the quantity 
\[f_n(x_1) = f_n(x_2) = 0\quad\quad\text{by}\quad\quad f_n(x_1) = \frac{f_n(x_2)-f_n(x_1)}{x_2-x_1} = 0.\]
If the variables $x_1$ and $x_2$ collapse, the second expression  becomes $f_n(x_1) = f_n'(x_1) = 0$. The regularity of the process $f_n$ given by the assumption $H_1(q)$ and the non-degeneracy of the limit process $f_\infty$ implies that the Gaussian vector $(f_n(x),f_n'(x))$ is non-degenerate and gives an alternative non-singular expression of the function $\rho_{2,n}$ near the diagonal. For higher integers $p$, the reasoning is the same. For each partition $\I$ of the set $\{1,\ldots,p\}$, we will thus give an alternative and non-singular expression of the density $\rho_{p,n}$, that extends by continuity on points $(x_1,\ldots,x_p)$ such that $x_i$ and $x_j$ are equal if $i$ and $j$ belong to the same cell of the partition $\I$. This procedure is explained in Section \ref{subsecter5}.\jump

From now the proof is considerably refined compared to \cite{Anc21}, where we rather use the powerful combinatorics of cumulants to simplify and enhance the results. Developing the expression of the cumulant of order $p$ as a function of the moments, we get
\begin{align*}
\kappa_p(\langle\nu_n,\phi\rangle) &= \sum_{\J}(|\J|-1)!(-1)^{|\J|-1}\prod_{J\in\J}\E\left[\langle \nu_n, \phi\rangle^{|J|}\right]\\
&= \int_{(nU)^p} \prod_{i=1}^p\phi\left(\frac{x_i}{n}\right) F_{p,n}(x_1\ldots,x_p)\dd x_1\ldots\dd x_p + \;\text{extra terms},\numberthis\label{eqter:57}
\end{align*}
where the sum indexed by $\J$ runs over all the partitions of the set $\{1,\ldots,p\}$, and with
\[F_{p,n}(x_1\ldots,x_p) = \sum_{\J}(|\J|-1)!(-1)^{|\J|-1}\prod_{J\in\J}\rho_{|J|,n}(\x_J).\]

Now let $\I$ be a partition of $\{1,\ldots,p\}$, and assume that for $i$ and $j$ belonging to two different cells of the partition $\I$, the variable $x_i$ and $x_j$ are far from each other. Then the decay hypothesis $H_2(q)$ implies that the Gaussian random variable $f_n(x_i)$ and $f_n(x_j)$ are almost independent, and from the definition of the Kac density $\rho_{p,n}$ we deduce that for $A\subset \{1,\ldots,p\}$,
\[\rho_{|A|,n}((x_a)_{a\in A}) \simeq \prod_{I\in\I} \rho_{|A\cap I|,n}((x_a)_{a\in A\cap I}).\numberthis\label{eqter:67}\]
Note that the function $\rho_{p,n}$ depends on $f_n$ only through the covariance matrix of the vector $(f_n(x_1),\ldots,f_n(x_p),f'_n(x_1),\ldots,f'_n(x_p))$. This matrix representation allows us to give a precise error term in \eqref{eqter:67}, proportional to the square of the magnitude of $r_n^{(u,v)}(x_i,x_j)$, where $i$ and $j$ belong to different cells of the partition $\I$. We refer to Section \ref{subsecter2} for matrix notations and to Section \ref{subsecter6} for the matrix representation of the Kac Density.\jump

The combinatoric properties of cumulants and \eqref{eqter:67} imply that
\[F_{p,n}(x_1\ldots,x_p) \simeq 0,\numberthis\label{eqter:03}\]
as soon as the variables $(x_i)_{1\leq i\leq p}$ are clustered with respect to some partition $\I$ with at least two cells. A refinement of Taylor expansion using graph theoretic arguments (see Section \ref{subsecter7}), gives a much more precise error in \eqref{eqter:03} than the approach taken in \cite{Anc21}, where it is roughly shown that a similar approximation as in \eqref{eqter:03} holds true only when one single variable is far from all the others (this reasoning also appears in different articles that treats cumulant asymptotics, see for instance \cite{Naz10, Bla19}). We then show that far from the deep diagonal $(x,\ldots,x)$ the function $F_{p,n}$ is small and will have sufficiently nice integrability properties on $(nU)^p$ in order to show in \eqref{eqter:57} that for $p\geq 3$,
\[\lim_{n\rightarrow +\infty} \frac{1}{n^{p/2}}\int_{(nU)^p} \prod_{i=1}^p\phi\left(\frac{x_i}{n}\right) F_{p,n}(x_1\ldots,x_p)\dd x_1\ldots\dd x_p = 0.\] 
Given the link between cumulants and central moments, this fact leads to the convergence of the central moment of order $p$ to the central moment of a Gaussian random variable. If moreover, the function $g_\omega$ is in $L^{\frac{p}{p-1}}(\R)$ then the function $F_{p,n}(0,x_2,\ldots,x_p)$ is integrable on $(nU)^{p-1}$, uniformly for $n\in\oNN$. This fact leads to the exact asymptotics of the $p$-th cumulant of the random variable $\langle \nu_n, \phi\rangle$.\jump

Despite its apparent simplicity, the detailed proof is quite technical and the diversity of arguments used justifies the following section, which introduces several notions and associated notations for the rest of the paper. In particular, the notion of partition of a finite set plays a central role in this article. From a combinatoric point of view, it appears in the Kac--Rice formula when expressing moments of the factorial power counting measure in terms of moments of the usual power measure, but also from the interpretation of cumulants in the context of Möebius inversion in the lattice of partition. The interplay between these last two combinatoric facts leads to an elegant expression of the cumulants of the zeros counting measure (given by Proposition \ref{Propter02}), and simplifies the approach taken by the authors in \cite{Anc21}, where they computed directly the asymptotics of central moments. \jump

 A novelty of this paper is also the intensive use of the matrix representation of the Kac density which allows us to dissociate the probabilistic setting, and facts concerning pure matrix analysis. We believe that this approach, already taken by the author in \cite{Gas21} to treat the asymptotic of the variance, greatly simplifies the exposition of proofs using Kac--Rice formulas.
\section{Basics and notations}
We define a few notations that will be of use and simplify the exposition. In the following, $A$ is a non-empty finite set. The letter $a,b,\ldots$ denote elements of $A$. The letters $B,C,\ldots$ denote subset of $A$. The letters $\I,\J,\ldots$ denote subsets of the power set of $A$.
\subsection{Partition and cumulants}
\subsubsection{Set theory}
We denote by $|A|$ the cardinal of the set $A$ and $\Pt(A)$ the power set of $A$. For a set $E$, we define $E^A$ the product of $|A|$ copies of $E$. A generic element of $E^A$ is denoted
\[\x_A = (x_a)_{a\in A}\]
to avoid any confusion when elements of $A$ are also sets. For a function $f:E\rightarrow\R$ and $\x_A\in E^A$ we  write
\[f(\x_A) = (f(x_a))_{a\in A}.\]
Let $\uphi_A=(\phi_a)_{a\in A}$ be functions from $E$ to $\R$. We define 
\[\uphi_A^\otimes : \x_A\mapsto \prod_{a\in A}\phi_a(x_a).\numberthis\label{eqter:18}\]
At last, we denote by
\[2A = \{1,2\}\times A.\numberthis\label{eqter:09}\]
The set $2A$ should be seen as the disjoint union of $A$ and a copy of itself. For an element $\x_A\in E^A$ we denote $\x_{A,A}$ the element $(\x_A,\x_A)\in E^{2A}$.
\subsubsection{The lattice of partitions and cumulants}
\label{subsecter4}
The material of this paragraph is very standard, we refer to \cite{Spe83, Pec11} for a nice introduction on this topic. We define $\Pt_A$ as the set of partitions of $A$. The partition of $A$ into singletons will be denoted $\overline{A}$. In the following, $B$ is a subset of $A$ and $\I$ is a partition of $A$. For $a\in A$ we denote $[a]_\I$ the cell of $\I$ in which the element $a$ belongs, and $\I_B$ the partition of $B$ induced by the partition $\I$ of $A$. For instance, if $\I = \{\{1,2\},\{3,4\},\{5\}\}$, $a=1$ and $B = \{1,2,3\}$ then
\[[a]_\I = \{1,2\}\quand \I_B = \{\{1,2\},\{3\}\}.\]
The partition $\I$ induce a partition on the set $2A$ via the relation
\[\enstq{2I}{I\in\I},\numberthis\label{eqter:11}\]
and we will still denote by $\I$ this partition.
\jump

The set $\Pt_A$ has a natural structure of a poset (partially ordered set). Given $\I$ and $\J$ two partition of $A$, we say that $\I$ is \textit{finer} than $\J$ (or that $\J$ is \textit{coarser} than $\I$) and we denote it $\I\preceq \J$ (or $\J\succeq \I$), if 
\[\forall I\in \I,\;\exists J\in \J\;\text{such that}\;I\subset J.\]
We then have
\[\I_J = \enstq{I\in\I}{I\subset J}\quand J = \bigsqcup_{I\in\I_J} I.\]
Note that there is a one-to-one correspondence between the set of partitions of $A$ coarser than a partition $\I$, and the set of partition of $\I$, given by the application
\[\J\mapsto \enstq{\I_J}{J\in\J}.\numberthis\label{eqter:63}\]
Following this observation, we denote $[I]_\J$ the cell of $\J$ in which the set $I$ is included. For instance, if $\I = \{\{1\},\{2,3\},\{4\},\{5\}\}$ and $\J = \{\{1,2,3\}\{4,5\}\}$ then $\I\preceq \J$ and
\[[\{2,3\}]_\J = \{1,2,3\}\quand \I_{\{4,5\}} = \{\{4\},\{5\}\}.\]

Note that two partitions $\I$ and $\J$ have a greatest lower bound and a least upper bound for this partial order, which turns $(\Pt_A,\preceq)$ into a finite lattice. Let $(m_B)_{B\subset A}$ and $(\kappa_B)_{B\subset A}$ be two families of numbers. In our case of interest, the Möebius inversion on this particular lattice takes the form
\[\left(\forall B\subset A,\quad m_B = \sum_{\I\in\Pt_B} \prod_{I\in\I}\kappa_{I}\right)\quad \text{iff}\quad\left(\forall B\subset A,\quad \kappa_B = \sum_{\I\in\Pt_B} (-1)^{|\I|-1}(|\I|-1)!\prod_{I\in\I}m_I\right).\numberthis\label{eqter:62}\]
We will make use of the following cancellation property of the cumulants.
\begin{lemma}
\label{Lemmater52}
Let $(m_B)_{B\subset A}$ and $(\kappa_B)_{B\subset A}$ be two families of numbers related by one of the equivalent formulas in \eqref{eqter:62}. Assume the existence of a partition $\I\neq \{A\}$ such that
\[\forall B\subset A,\quad m_B = \prod_{I\in \I} m_{I\cap B}.\]
Then 
\[\kappa_A=0.\]
\end{lemma}
\begin{proof}
See \cite{Spe83}.
\end{proof}
If $(X_a)_{a\in A}$ is a family of random variables, we can define for a subset $B$ of $A$
\[m_B((X_b)_{b\in B}) = \E\left[\prod_{b\in B}X_b\right]\quand \kappa_B((X_b)_{b\in B}) = \sum_{\I\in\Pt_B} (-1)^{|\I|-1}(|\I|-1)!\prod_{I\in\I}\E\left[\prod_{i\in I}X_i\right].\numberthis\label{eqter:68}\]
The quantity $m_B((X_b)_{b\in B})$ (resp. $\kappa_B((X_b)_{b\in B})$) is the joint moment (resp. cumulant) of the family of random variables $(X_b)_{b\in B}$. The previous Lemma \ref{Lemmater52} translates in the following property for the cumulant. If there is a partition $\I$ with at least two cells, such that the collection of random variables $(X_i)_{i\in I}$ for $I\in\I$ are mutually independent, then the joint cumulant of the family $(X_a)_{a\in A}$ is zero.\jump

The joint cumulants are a convenient tool in the Gaussian framework, since for a Gaussian vector $(X_a)_{a\in A}$, the joint cumulant $\kappa_A((X_a)_{a\in A})$ cancels as soon as $|A|\geq 3$. Conversely, a random variable $X$ such that $\kappa_p(X,\ldots,X) = 0$ for all $p\geq 3$ is Gaussian.
\subsection{Diagonal set and factorial power measure}
\label{subsecter1}
We will see in Section \ref{subsecter3} that the Kac--Rice formula gives an integral formula for the $p$-th factorial power measure of the zero set of a Gaussian process. The expression of the Kac density degenerates near the diagonal and it motivates the introduction of a few notations for the diagonal of a set and factorial power measure. In the following, $A$ is a finite set and $(E,d)$ is a metric space. This section is largely inspired by \cite[Sect. 4.3]{Anc19} and \cite[Sect. 6.1]{Anc21}, in particular for the quick and efficient description of the diagonal clustering.
\subsubsection{Diagonal set and diagonal inclusion}
We define the (large) diagonal of $E^A$ as the subset
\[\Delta := \Delta^A = \enstq{\x_A\in E^A}{\exists a,b\in A\quad\text{with}\quad a\neq b\quand x_a=x_b}.\]
Let $\I$ be a partition of the set $A$. We define 
\[\Delta_{\I} = \enstq{\x_A\in E^A}{x_a=x_b\;\Longleftrightarrow\;[a]_\I = [b]_\I}.\]
From this definition, one has the following decomposition of the space $E$
\[E^A = \bigsqcup_{\I\in\Pt_A}\Delta_\I, \quad\Delta = \bigsqcup_{\substack{\I\in\Pt_A\\\I\neq \overline{A}}}\Delta_\I\quand E^A\setminus\Delta = \Delta_{\overline{A}},\]
where $\overline{A}$ is the partition of $A$ in singletons. We also define
\[\Delta_{\I^+} := \bigsqcup_{\J\preceq \I}\Delta_\J = \enstq{\x_A\in E^A}{x_a=x_b\;\Longrightarrow\;[a]_\I = [b]_\I}.\]
\subsubsection{Enlargement of the diagonal set}
We fix a number $\eta\geq 0$ and $\x_A\in E^A$. We define the graph $G_\eta(\x_A)$ with set vertices $A$, and where two vertices $a$ and $b$ are connected by an edge if $d(x_a,x_b)\leq\eta$. Denote by $\I_\eta(\x_A)$ the partition of $A$ induced by the connected components of $G_\eta(\x_A)$. It allows us to define the subset
\[\Delta_{\I,\eta} = \enstq{\x_A\in E^A}{\I_\eta(\x_A)=\I}. \numberthis\label{eqter:22}\]
If $\eta=0$ then $\Delta_{\I,0}=\Delta_\I$. In the case where $\eta> 0$ we have $\overline{\Delta_{\I,\eta}} \subset \Delta_{\I^+}$. As in the case $\eta=0$, we also have
\[E^A = \bigsqcup_{\I\in\Pt_A}\Delta_{\I,\eta}.\]
The fundamental property of this construction is the following. Let $a,b\in A$ and $\x_A\in \Delta_{\I,\eta}$. If $[a]_\I = [b]_\I$ then 
\[d(x_a,x_b)\leq |A|\eta,\]
and if $[a]_\I \neq [b]_\I$ then 
\[d(x_a,x_b)> \eta.\]
Note that any partition of the space $E^A$ indexed by all the possible partitions of $A$ that satisfies these two properties would also work. The one described above is a quick and efficient way to prove the existence of such partition.
\subsubsection{The factorial power measure}
We define the diagonal inclusion 
\begin{align*}
\iota_\I : E^\I&\longrightarrow E^A\\
\x_\I&\longrightarrow (x_{[a]_\I})_{a\in A}.
\end{align*}
For instance, if $\I = \{\{1,3\},\{2\}\}$ then $\iota_\I (x,y) = (x,y,x)$. A direct consequence of this definition is that the mapping $\iota_\I$ is a bijection between $E^\I\setminus\Delta$ and $\Delta_\I$.\jump

Let $Z$ be a locally finite subset of the metric space $E$. We set $\nu:= \sum_{x\in Z}\delta_x$ the counting measure on $Z$, and
\[\nu^A = \sum_{x\in Z^A}\delta_x\quad\quand\quad\nu^{[A]} = \sum_{x\in Z^A\setminus \Delta}\delta_x.\]
The measure $\nu^A$ (resp. $\nu^{[A]}$) is the power (resp. factorial power) measure of the measure $\nu$. Both measures are linked by the following lemma.
\begin{lemma}
\label{Lemmater01}
With the notations as above, one has
\[\nu^A = \sum_{\I\in\Pt_A}\iota_\I\,\!_*\nu^{[\I]}.\]
\end{lemma}
\begin{proof}
We have 
\[\sum_{x\in Z^A}\delta_x = \sum_{\I\in\Pt_A}\left(\sum_{x\in Z^A\cap\Delta_\I}\delta_x\right).\]
Using the fact that the mapping $\iota_\I$ is a bijection between $E^\I\setminus\Delta$ and $\Delta_\I$, one gets
\[\sum_{x\in Z^A\cap\Delta_\I}\delta_x = \sum_{y\in Z^\I\setminus\Delta}\delta_{\iota_\I(y)}=\iota_\I\,\!_*\nu^{[\I]}.\]
\end{proof}
\subsection{Matrix notations}
\label{subsecter2}
The Kac density (see Section \ref{subsecter3} and Lemma \ref{Lemmater22}) is expressed in term of the covariance matrix of the underlying Gaussian process and its derivatives. This fact allows us to consider the Kac density as a function defined on the set of positive definite matrices, evaluated in some covariance matrix related to our underlying Gaussian process. To this end, we introduce a few useful notations
\subsubsection{Basic matrix notations}
In the following, we define $\M_A(\R)$, $\Ss_A(\R)$ and $\Ss_A^+(\R)$ respectively the sets of square, symmetric and symmetric positive definite matrices acting on the space $\R^A$ equipped with its canonical basis. If $B$ is another finite set, we define $\M_{B,A}(\R)$ the set of matrices from $\R^A$ to $\R^B$. The open subset of matrices in $\M_{B,A}(\R)$ with maximal rank is denoted $\M_{B,A}^*(\R)$. For a matrix $\Gamma \in \M_{B,A}(\R)$ we define
\[\|\Gamma\| = \sup_{i,j} |\Gamma_{i,j}|.\]
Given a matrix $\Sigma \in \M_{2B,2A}(\R)$, we write
\[\Sigma = \begin{pmatrix}[c|c]
\Sigma^{11} & \Sigma^{12}\\
\hline
\Sigma^{21} & \vphantom{\int^{\int^a}}\Sigma^{22}\quad
\end{pmatrix},\quad \Sigma^{11}, \Sigma^{12}, \Sigma^{21}, \Sigma^{22}\in \M_{B,A}(\R).\]
Let $\Sigma\in\M_{2A}(\R)$. If the matrix $\Sigma^{11}$ is invertible, we define the matrix $\Sigma^c\in\M_A(\R)$ to be the Schur complement of $\Sigma^{11}$ in $\Sigma$ :
\[\Sigma^c = \Sigma^{22} - \Sigma^{21}(\Sigma^{11})^{-1}\Sigma^{12}.\]
This matrix arises from the identity
\[\begin{pmatrix}[c|c]
\Id & 0\\
\hline
-\Sigma^{21}(\Sigma^{11})^{-1} & \vphantom{\int^{\int^a}}\;\Id\;\quad
\end{pmatrix}\begin{pmatrix}[c|c]
\Sigma^{11} & \Sigma^{12}\\
\hline
\Sigma^{21} & \vphantom{\int^{\int^a}}\Sigma^{22}\quad
\end{pmatrix} = \begin{pmatrix}[c|c]
\Sigma^{11} & \Sigma^{12}\\
\hline
0 & \vphantom{\int^{\int^a}}\Sigma^{c}\quad
\end{pmatrix}.\]
In particular
\[\det(\Sigma) = \det(\Sigma^{11})\det(\Sigma^c).\numberthis\label{eqter:23}\]
If $\Sigma\in\Ss_{2A}^+(\R)$ then $\Sigma^c\in\Ss_A^+(\R)$ and
\[(\Sigma^c)^{-1} = (\Sigma^{-1})^{22}.\numberthis\label{eqter:24}\]
\subsubsection{Covariance matrix and Gaussian conditioning}
Let $\underline{X}_A = (X_a)_{a\in A}$ and $\underline{Y}_A = (Y_a)_{a\in A}$ two sequences of jointly centered Gaussian vectors. We assume that the Gaussian vector $(\underline{X}_A,\underline{Y}_A)$ is non-degenerate. We define
\[\Sigma^{11} = \Cov(\underline{X}_A),\quad\Sigma^{22} = \Cov(\underline{Y}_A),\quad \Sigma^{12}=\Cov(\underline{X}_A,\underline{Y}_A),\]
and
\[\Sigma := \Cov\left[\left(\underline{X}_A,\underline{Y}_A\right)\right] = \begin{pmatrix}[c|c]
\Sigma^{11} & \Sigma^{12}\\
\hline
^T\Sigma^{12} & \vphantom{\int^{\int^a}}\Sigma^{22}\quad
\end{pmatrix}.\]
\begin{lemma}
\label{Lemmater05}
We have
\[\Law(\underline{Y}_A|\underline{X}_A=0) \sim \Nn(0, \Sigma^c).\]
\end{lemma}
\begin{proof}
We define the Gaussian vector
\[\underline{Y}_A^c = \underline{Y}_A -\,\!^T\Sigma^{12}(\Sigma^{11})^{-1}\underline{X}_A.\]
Then
\[\Cov(\underline{X}_A,\underline{Y}_A^c) = 0\quand\Cov(\underline{Y}_A^c) = \Sigma^c.\]
Since decorrelation implies independence for Gaussian vectors, we have the following equality of conditional distributions
\[\Law(\underline{Y}_A|\underline{X}_A) \sim \Nn(\,\!^T\Sigma^{12}(\Sigma^{11})^{-1}\underline{X}_A, \Sigma^c)\quand\Law(\underline{Y}_A|\underline{X}_A=0) \sim \Nn(0, \Sigma^c).\]
\end{proof}
\subsubsection{Compactness in matrix sets}
The following lemmas give equivalent conditions to being compact in several matrix spaces.
\begin{lemma}
\label{Lemmater02}
A set $K$ is relatively compact in $\Ss_{A}^+(\R)$ if and only if one can find positive constants $c_K$ and $C_K$ such that
\[\forall \Sigma\in K,\quad \det\Sigma\geq c_K,\quad \|\Sigma\|\leq C_K.\]
\end{lemma}
\begin{lemma}
\label{Lemmater03}
Let $B$ be a subset of $A$. A set $K$ is relatively compact in $\M_{B,A}^*(\R)$ if and only if one can find positive constants $c_K$ and $C_K$ such that
\[\forall Q\in K,\quad \det Q\,\!^T\!Q\geq c_K,\quad \|Q\|\leq C_K.\]
\end{lemma}
\begin{lemma}
\label{Lemmater04}
Let $K$ be a relatively compact subset of $\M_{B,A}^*(\R)\times\Ss_{A}^+(\R)$. Then the set
\[\enstq{Q\Sigma\,\!^T\!Q}{(Q,\Sigma)\in K}\]
is relatively compact in $\Ss_B^+(\R)$.
\end{lemma}

\begin{proof}[Proof of the lemmas]
The proof of Lemma \ref{Lemmater02} is a direct consequence of the continuity of the determinant. For Lemma \ref{Lemmater03}, note that a matrix $Q$ of $\M_{B,A}(\R)$ is of maximal rank if and only if $\det Q\,\!^T\!Q > 0$. The conclusion follows again from the continuity of the determinant. As for Lemma \ref{Lemmater04}, let $\Sigma\in \Ss_{A}^+(\R)$ and $Q\in \M_{B,A}^*(\R)$. A direct computation shows that the matrix $Q\Sigma\,\!^T\!Q$ is positive definite. The conclusion then follows from the continuity of the application $(Q,\Sigma)\mapsto Q\Sigma\,\!^T\!Q$.
\end{proof}

\subsubsection{Block diagonal matrix with respect to a partition}
Let $B$ and $C$ be subsets of $A$, and $\Gamma\in \M_{B,C}(\R)$. For $I$ and $J$ subset of $A$ we define
\[\Gamma_{I,J} = (\Gamma_{i,j})_{i\in I\cap B,j\in J\cap C}\quand \Gamma_I = \Gamma_{I,I}.\numberthis\label{eqter:08}\]
Now let $\Sigma\in \M_{2B,2C}(\R)$. We define similarly
\[\Sigma_{I,J} = \begin{pmatrix}[c|c]
\Sigma^{11}_{I,J} & \Sigma^{12}_{I,J}\\
\hline
\Sigma^{21}_{I,J} & \vphantom{\int^{\int^a}}\Sigma^{22}_{I,J}\quad
\end{pmatrix}\quand \Sigma_I  = \Sigma_{I,I}.\]
For a partition $\I$ of the set $A$ and a matrix $\Gamma\in \M_{B,C}(\R)$ we define $\Gamma_{\I}$ to be the block diagonal matrix with blocks $(\Gamma_I)_{I\in\I}$. Similarly, for a matrix $\Sigma\in\M_{2B,2C}(\R)$ we define
\[\Sigma_\I = \begin{pmatrix}[c|c]
\Sigma^{11}_{\I} & \Sigma^{12}_{\I}\\
\hline
\Sigma^{21}_{\I} & \vphantom{\int^{\int^a}}\Sigma^{22}_{\I}\quad
\end{pmatrix}.\]
Note that if $\Sigma \in \Ss_{2A}^+(\R)$ such that $\Sigma = \Sigma_\I$, then 
\[(\Sigma^c)_\I = (\Sigma_\I)^c,\numberthis\label{eqter:25} \]
but the equality is not true in all generality.
\subsubsection{Power product space}
\label{subsecter8}
We introduce a technical matrix space, that will be central in the alternative expression of the cumulant Kac density in Section \ref{subsecter9}. We define the sets
\[\widetilde{\M}_A(\R) := \prod_{B\subset A}\left(\M_B(\R)\times \M_{2B,2A}(\R)\right)\quand \widetilde{\M}_A^*(\R) := \prod_{B\subset A}\left(\M_B(\R)\times \M_{2B,2A}^*(\R)\right).\numberthis\label{eqter:26}\]
The space $\widetilde{\M}_A^*(\R)$ is an open subset of $\widetilde{\M}_A(\R)$. An element $(M^B,Q^B)_{B\subset A}$ of $\widetilde{\M}_A(\R)$ will be denoted $(\widetilde{M},\widetilde{Q})$, with
\[\widetilde{M} = (M^B)_{B\subset A}\quand \widetilde{Q} = (Q^B)_{B\subset A}.\]
If $C$ is a subset of $A$, we denote $(\widetilde{M}_C,\widetilde{Q}_C)$ the element of  $\widetilde{\M}_C(\R)$ defined by
\[\widetilde{M}_C = \left(M^B_C\right)_{B\subset C}\quand \widetilde{Q}_C = \left(Q^B_C\right)_{B\subset C}.\]
At last, if $\I$ is a partition of $A$, we define
\[\widetilde{M}_\I = \left(M^B_\I\right)_{B\subset A}\quand \widetilde{Q}_\I = \left(Q^B_\I\right)_{B\subset A}.\]
\subsection{Divided differences}
\label{subsecter5}
We now introduce the notion of divided differences. Classically used in interpolation theory, we use it in order to give a non-degenerate expression of the Kac density near the diagonal. This approach was first taken in \cite{Cuz75} in order to give a necessary and sufficient condition for the finiteness of the $p$-th moment of the number of zeros on an interval, and has been extensively used in \cite{Anc21}. The results of this section is standard material about interpolation, but we will recall basic properties of divided differences.
\subsubsection{Definition and basic properties}
\paragraph{Definition}\strut\jump

\noindent
Let $f$ be a function defined on an open interval $U$ of $\R$ or $\T$. We use the notations of Section \ref{subsecter1}. In particular, we consider $\Delta$ the large diagonal of $U^A$.
 Let $A$ be a finite set and $\x_A \in U^A\setminus\Delta$. We define $\R^A[X]$ the space of polynomials of degree $|A|-1$. The polynomial
\[L[f;\x_A] :x\mapsto \sum_{a\in A}f(x_a)\prod_{b\neq a} \frac{x-x_b}{x_a-x_b}\]
interpolates the function $f$ at the point $(x_a)_{a\in A}$. It is the only polynomial in $\R^A[X]$ with this property, since the difference of two such polynomials cancels at least $|A|$ times and thus must be zero. The application 
\[\pi_{\x_A} : f\mapsto L[f;\x_A]\]
is a projector onto the space $\R^A[X]$, whose kernel is the space of functions that cancels on $\x_A$. We then define the divided difference of $f$ as the coefficient of degree $|A|-1$ in $L[f;\x_A]$. More explicitly,
\[f[\x_A] = \sum_{a\in A} f(x_a) \prod_{b\neq a} \frac{1}{x_a-x_b}.\]
For instance,
\[f[x] = f(x),\quad f[x,y] = \frac{f(x)-f(y)}{x-y},\quad f[x,y,z] = \frac{f(x)(z-y) + f(y)(x-z) + f(z)(y-x)}{(x-y)(y-z)(z-x)},\]
and so on. The following lemma is analogous of Taylor expansion theorem, in the context of divided differences.
\begin{lemma}
\label{Lemmater06}
For $a\in A$, one has
\[L[f;\x_A](x) = L[f;\x_{A\setminus \lbrace a\rbrace}](x) + f[\x_A]\prod_{b\neq a}(x-x_b),\]
and
\[f(x) = L[f;\x_A](x) + f[\x_A,x]\prod_{a\in A}(x-x_a).\]
\end{lemma}
\begin{proof}
For $a\in A$, the polynomial
\[x\mapsto L[f;\x_A](x) - f[\x_A]\prod_{b\neq a}(x-x_b)\]
interpolates the points $(x_b)_{b\neq a}$ and is of degree $|A|-2$. By uniqueness of the interpolating polynomial, it coincides with the polynomial $L[f;\x_{A\setminus \lbrace a\rbrace}]$. Hence the first statement. An application of this formula with interpolating points $A\cup\lbrace x\rbrace$ yields the second statement.
\end{proof}
%
\paragraph{Continuity property of the divided differences}\strut\jump

\noindent
Assume from now on that the function $f$ is of class $\CC^{|A|-1}$. Define $C_A$ to be the standard simplex of dimension $|A|-1$ :
\[C_A = \enstq{\underline{t}_A\in [0,1]^A}{\sum_{a\in A}t_a=1}.\]
We can equip the simplex $C_A$ with the induced rescaled Lebesgue measure $\dd m$, so that $m(C_A)= \frac{1}{(|A|-1)!}$. We then have the following integral representation for the divided differences, which is analogous to the integral remainder in Taylor expansion in the context of divided differences.
\begin{lemma}[Hermite--Genocchi formula]
\label{Lemmater07}
We have
\[f[\x_A] = \int_{C_A} f^{(|A|-1)}\left(\sum_{a\in A}t_ax_a\right)\dd m(\underline{t}_A).\]
In particular, the application $\x_A\mapsto f[\x_A]$ continuously extends to the whole space $U^A$.
\end{lemma}
\begin{proof}
See \cite{Boo05}.
\end{proof}

This proposition allows us to extend by continuity the functions 
\[\x_A\mapsto  f[\x_A]\quand\x_A\mapsto L[f;\x_A],\]
from $U^A\setminus \Delta$ to the whole space $U^A$. For instance, if $x_a = y$ for all $a\in A$, then 
\[f[\x_A] = \frac{f^{(|A|-1)}(y)}{(|A|-1)!}\quand L[f;\x_A](x) = \sum_{i=0}^{|A|-1} \frac{f^{(i)}(y)}{i!}(x-y)^i.\numberthis\label{eqter:12}\]
This expression coincides with the Taylor polynomial of order $|A|-1$ of the function $f$ at the point $y$. The continuity property of the divided differences allows us to extend Lemma \ref{Lemmater06} and \ref{Lemmater07} by taking $\x_A$ in the whole space $U^A$ and $x\in \R$.\jump
\paragraph{Divided difference of a polynomial}\strut\jump

\noindent
In this section, $P$ denotes a polynomial. The Hermite--Genocchi  formula \ref{Lemmater07} directly implies the following lemma.
\begin{lemma}
\label{Lemmater08}
The quantity $P[\x_A]$ is a polynomial expression of the coefficients $\x_A$.
\end{lemma}
For instance, if $P(x) = x^4$ then
\[P[x,y] = x^3 + x^2y + xy^2 + y^3.\]
If $\deg P<|A|$ then $L[P;\x_A,x] = P$, and the coefficient of degree $|A|$ of this polynomial is zero, which implies that $P[\x_A,x]=0$. In the following we assume that $\deg P\geq |A|$.
\begin{lemma}
\label{Lemmater09}
The polynomial $x\mapsto P[\x_A,x]$ is of degree $\deg P-|A|$, and the leading coefficient this polynomial is the leading coefficient of the polynomial $P$.
\end{lemma}
\begin{proof}
From the definition of the divided differences, 
\[P[\x_A,x] \underset{x\rightarrow+\infty}{\simeq} \frac{P(x)}{\prod_{a\in A}(x-x_a)},\]
which implies that the polynomial $x\mapsto P[\x_A,x]$ is of degree $\deg P-|A|$, and its leading coefficient is the leading coefficient of $P$.
\end{proof}

\paragraph{Iterated divided differences}\strut\jump

\noindent
Let $B$ be a subset of $A$ and $\x_B\in U^B$. We define the function
\[f^{[\x_B]}:x\mapsto f[\x_B,x].\numberthis\label{eqter:16}\]
\begin{lemma}
\label{Lemmater10}
Let $\x_A\in U^A$. Then
\[L[f;\x_A][\x_B,\,.\,] = L[f^{[\x_B]},\x_{A\setminus B}],\]
and
\[f[\x_A] = f^{[\x_B]}[\x_{A\setminus B}].\]
Let $x\in U$. Then
\[f[\x_B,x] = L[f;\x_A][\x_B,x] + f[\x_A,x]\prod_{a\in A\setminus B}(x-x_a).\]
\end{lemma}
\begin{proof}
Consider the two polynomials
\[P_1 = L[f^{[\x_B]};\x_{A\setminus B}]\quad \quand\quad P_2 = L[f;\x_A]^{[\x_B]}.\]
They both interpolate the points $\x_{A\setminus B}$ at values $(f[\x_B,x_a])_{a\in A\setminus B}$. The polynomial $P_1$ is in $\R^{A\setminus B}[X]$, and $L[f;\x_A]$ is in $\R^A[X]$. By Lemma \ref{Lemmater09}, the polynomial $P_2$ is also in $\R^{A\setminus B}[X]$. By uniqueness of the interpolating polynomial, these two polynomials are equal, hence the first statement.\jump

The coefficient of degree $|A\setminus B|-1$ in $P_1$ is $f^{[\x_B]}[\x_{A\setminus B}]$, and the one in $P_2$ is $f[\x_A]$ according to Lemma \ref{Lemmater09}. By the previous equality these two coefficients are equal, which yields the second formula. The last formula is a direct application of Lemma \ref{Lemmater06} applied to the function $f^{[\x_B]}$.
\end{proof}
\subsubsection{Matrix viewpoint of the divided differences}
In order to describe the divided differences of a Gaussian process and the induced transformation on the covariance matrix, we rewrite the operation of taking divided differences from a matrix viewpoint. From now on, we equip $A$ with an arbitrary total order $\leq$ and we introduce the notation
\[a\mid A = \enstq{b\in A}{b\leq a}.\]
Thus, $\x_{\,a\mid A} = (x_b)_{b\leq a}$.
\paragraph{Basis of polynomials adapted to the divided difference}\strut\jump

\noindent
For $\x_A\in \R^A$ we define the polynomial
\[P^a_{\x_A}: x\mapsto \prod_{b<a} (x-x_b).\]
For any subset $B$ of $A$, the family $(P^b_{\x_B})_{b\in B}$ is a basis of the space $\R^B[X]$ and we will always equip the space $\R^B[X]$ with this basis. 
\begin{remark}
\label{Remarkter1}
The family $(P^a_{\x_A})_{a\in A}$ is a family of monic polynomials of increasing degrees. Thus, if we equip $A$ with another total order, the underlying transformation matrix is of determinant $1$ and the coefficients are polynomial quantities in $(x_i-x_j)_{i,j\in A}$. It justifies in the following why the order can be chosen arbitrarily. 
\end{remark}
A direct induction based on Lemma \ref{Lemmater06} implies the following lemma.
\begin{lemma}
\label{Lemmater11}
Let $\x_A\in U^A$. Then
\[L[f;\x_A] = \sum_{a\in A} f[\x_{\,a\mid A}]P^a_{\x_A}.\]
\end{lemma}

The finite differences of $f$ are thus the coefficients of the Lagrange interpolation polynomial in the basis $(P^a_{\x_A})_{a\in A}$. We define 
\[f_A[\x_A] = (f[\x_{a|A}])_{a\in A}\quand M(\x_A) = (P^b_{\x_A}(x_a))_{a,b\in A}.\]
Then Lemma \ref{Lemmater11} rewrites matricially
\[f(\x_A) = M(\x_A)f_A[\x_A]\numberthis\label{eqter:05}.\]
For instance,
\[\begin{pmatrix}
f(x) \\ 
f(y) \\
f(z)
\end{pmatrix}  = \begin{pmatrix}
1 & 0 & 0\\ 
1 & y-x & 0\\
1 & z-x & (z-x)(z-y)
\end{pmatrix}\begin{pmatrix}
f[x] \\ 
f[x,y]\\
f[x,y,z]
\end{pmatrix} .\]
The matrix $M(\x_A)$ is lower triangular, thus
\[\det M(\x_A) = \prod_{a\in A} P^a_{\x_A}(x_a) = \prod_{a<b}(x_b-x_a).\numberthis\label{eqter:04}\]
\paragraph{Divided differences with respect to a partition}\strut\jump

\noindent
In the following, $\I$ is a partition of the set $A$. We define
\[f_\I[\x_A] = (f_I[\x_I])_{I\in \I} = (f[\x_{i|I}])_{i\in I,I\in\I}\]
We can perform the divided difference independently on each cell of the partition. That is, we can write for all $I\in\I$
\[f(\x_I) = M(\x_I)f_I[\x_I]\quand f(\x_A) = M^\I(\x_A)f_\I[\x_A];\numberthis\label{eqter:06}\]
where $M^\I(\x_A)$ is the block diagonal matrix with blocks $(M(\x_I))_{I\in\I}$. For instance, if $\I = \{\{1,2\},\{3,4\}\}$ then
\[\begin{pmatrix}
f(w) \\
f(x) \\ 
f(y) \\
f(z)
\end{pmatrix} = \begin{pmatrix}
1 & 0 & 0 & 0 \\ 
1 & x-w & 0 & 0 \\ 
0 & 0 & 1 & 0 \\ 
0 & 0 & 1 & z-y
\end{pmatrix} \begin{pmatrix}
f[w] \\
f[w,x] \\ 
f[y] \\
f[y,z]
\end{pmatrix}.
\]
From Equation \eqref{eqter:04}, 
\[\det M^\I(\x_A) = \prod_{I\in\I}\det M(\x_I) = \prod_{I\in\I}\prod_{\substack{i,j\in I\\i< j}}(x_j-x_i).\]

\begin{lemma}
\label{Lemmater12}
Let $\x_A\in \Delta_{\I,\eta}$. There is a constant $C(\eta)$ such that
\[\|M^\I(\x_A)\| \leq C(\eta).\]
\end{lemma}
\begin{proof}
Let $a,b\in A$. If $[a]_\I\neq[b]_\I$, then $(M^\I(\x_A))_{a,b} = 0$. Else, there is $I\in\I$ such that $a,b\in I$. Then
\[(M^\I(\x_A))_{a,b} = P^b_{\x_I}(x_a) = \prod_{\substack{c<b\\c\in I}} (x_a-x_c).\]
For $a,c\in I$, one has $|x_a-x_c|\leq |A|\eta$. The conclusion follows.
\end{proof}
For $\x_A\in U^A$ we consider the mapping
\begin{align*}
\pi_{\x_A}^\I : \R^A[X]&\longrightarrow \prod_{I\in\I}\R^I[X]\\
P&\longrightarrow (L[P;\x_I])_{I\in\I}.
\end{align*}
It is well defined for $\x_A\in  U^A$, since a polynomial is infinitely differentiable. For a subset $I$ of $A$, we equip $\R^I[X]$ with the basis of polynomials $(P^i_{\x_I})_{i\in I}$. Let $Q^\I(\x_A)$ be the matrix of the application $\pi_{\x_A}^\I$ in that basis. For instance, when $\I = \{\{1,2\},\{3\}\}$ then
\[\begin{pmatrix}
f[x] \\ 
f[x,y] \\
f[z]
\end{pmatrix}  = \begin{pmatrix}
1 & 0 & 0\\ 
0 & 1 & 0\\
1 & z-x & (z-x)(z-y)
\end{pmatrix}\begin{pmatrix}
f[x] \\ 
f[x,y]\\
f[x,y,z]
\end{pmatrix}.\]
Let $\x_A\in U^A\setminus\Delta$. Then $M^\I(\x_A)$ is invertible and from Equation \eqref{eqter:05} and \eqref{eqter:06}, one has
\[f_\I[\x_A] = [M^\I(\x_A)]^{-1}M(\x_A)f_A[\x_A],\]
and thus
\[Q^\I(\x_A) = [M^\I(\x_A)]^{-1}M(\x_A).\]
It implies that
\[\left|\det Q^\I(\x_A)\right| = \prod_{\substack{\{I,J\} \subset \I\\I \neq J}}\prod_{i\in I}\prod_{j\in J}|x_i-x_j|.\numberthis\label{eqter:07}\]
By continuity with respect to $\x_A$ of the application $\pi_{\x_A}^\I$ this formula remain true for $\x_A\in U^A$. We deduce that the application $\pi_{\x_A}^\I$ is invertible for $\x_A\in\Delta_{\I^+}$. \jump
Now let $\J$ be another partition of $A$, finer than the partition $\I$. For $\x_A\in 
U^A$ we consider the mapping
\begin{align*}
\pi_{\x_A}^{\I,\J} : \prod_{I\in\I}\R^I[X]&\longrightarrow \prod_{J\in\J}\R^J[X]\\
(P_I)_{I\in\I} &\longrightarrow (L[P_{[J]_I};\x_J])_{J\in\J}.
\end{align*}
Restricted to $\R^I[X]$, the application $\pi_{\x_A}^{\I,\J}$ coincides with $\pi_{\x_I}^{\J_I}$. Let $Q^{\I,\J}(\x_A)$ be the matrix of the application $\pi_{\x_A}^{\I,\J}$, so that
\[f_\J[\x_A]= Q^{\I,\J}(\x_A)f_\I[\x_A].\numberthis\label{eqter:13}\]
\begin{lemma}
\label{Lemmater13}
\[\left|\det Q^{\I,\J}(\x_A))\right| = \prod_{I\in \I} \prod_{\substack{\{ J_1,J_2\}\subset \J_I\\J_1 \neq J_2}}\prod_{j_1\in J_1}\prod_{j_2\in J_2}|x_{j_1}-x_{j_2}|.\]
In particular, the application $\pi_{\x_A}^{\I,\J}$ is invertible in the case where $\x_A\in \Delta_{\J^+}$.
\end{lemma}
\begin{proof}
We have from Equation \eqref{eqter:07}
\[\left|\det Q^{\I,\J}(\x_A))\right| = \prod_{I\in\I} \left|\det Q^{\J_I}(\x_I))\right|= \prod_{I\in \I} \prod_{\substack{\{ J_1,J_2\}\subset \J_I\\J_1 \neq J_2}}\prod_{j_1\in J_1}\prod_{j_2\in J_2}|x_{j_1}-x_{j_2}|.\]
And this expression does not cancel when $\x_A\in \Delta_{\J^+}$.
\end{proof}
\paragraph{Divided difference on a subset}\strut\jump

\noindent
In this section, $B$ is a subset of $A$ and $\I$ is a partition of $A$. From Equation \eqref{eqter:05} and \eqref{eqter:06}, one has
\[f(\x_B) = M_{B,A}(\x_A)f_A[\x_A]\quand f(\x_B) = M_{B,A}^\I(\x_A)f_\I[\x_A],\]
where for a matrix $M$, the matrix $M_{B,A}$ is defined in \eqref{eqter:08}. For $\x_A\in U^A$ we consider the mapping
\begin{align*}
\pi_{\x_A}^{\I,B} : \prod_{I\in\I}\R^I[X]&\longrightarrow \prod_{I\in\I}\R^{I\cap B}[X]\\
(P_I)_{I\in\I} &\longrightarrow (L[P_I;\x_{I\cap B}])_{I\in\I},\numberthis\label{eqter:10}
\end{align*}
and let $Q^{\I,B}(\x_A)$ be the matrix of this application.
\begin{lemma}
\label{Lemmater14}
The set
\[\enstq{Q^{\I,B}(\x_A)}{\x_A\in \Delta_{\I,\eta}}\]
is relatively compact in $\M_{B,A}^*(\R)$.
\end{lemma}
\begin{proof}
Let $\x_A\in\Delta_{\I,\eta}$. The matrix $Q^{\I,B}(\x_A)$ is block diagonal with respect to the partition $\I$, with blocks $(Q^{\{I\},B\cap I}(\x_I))_{I\in\I}$. If $B\cap I = b|I$ for some $b\in B\cap I$, then
\[Q^{\{I\},B\cap I}(\x_I) = \begin{pmatrix}[c|c]
\Id_{B\cap I}\vphantom{\int^\int} &
0\vphantom{\int^\int}
\end{pmatrix},\]
and the conclusion follows. If not, we change the order on $I$, so that $B\cap I = b|I$ for some $b\in B\cap I$. According to Remark \ref{Remarkter1}, the change-of-basis matrix is of determinant $1$ and the coefficients are polynomial expressions in $(x_i-x_j)_{i,j\in I}$. Since $\x_A\in \Delta_{\I,\eta}$, there are bounded, and the matrix $Q^{\{I\},B\cap I}(\x_I)$ is of maximal rank. The conclusion in the general case follows from Lemma \ref{Lemmater04}.

\end{proof}
\paragraph{Doubling divided differences}\strut\jump

\noindent
In the paragraph, we consider the divided difference on the set $2A$ defined in \eqref{eqter:09}. We equip the set $2A$ with the lexicographic order inherited from the order on $\{1,2\}$ and the arbitrary order on $A$. Note that
\[(f[x],f[x,x]) = (f(x),f'(x)).\]
The interest of doubling divided differences is to consider simultaneous interpolation of the function $f$ and $f'$ on prescribed points $(\x_a)_{a\in A}$. This part coincides with the classical Hermite interpolation and we will assume that $f$ is of class $\CC^{2|A|-1}$.\jump

Let $\I$ be a partition of $A$ (and thus of $2A$, following the notation \eqref{eqter:11}). As a consequence of Definition \eqref{eqter:10}, one has
\[f_{\I_B}[\x_{B,B}] = Q^{\I,2B}(\x_{A,A})f_{\I}[\x_{A,A}].\numberthis\label{eqter:48}\]
We define
\[\widetilde{M}^\I(\x_A) = (M^{\I_B}(\x_B))_{B\subset A}\quand \widetilde{Q}^\I(\x_A) = (Q^{\I,2B}(\x_{A,A}))_{B\subset A}. \numberthis\label{eqter:27}\]
Recall the definition of $\widetilde{\M}_{A}(\R)$ and $\widetilde{\M}_{A}^*(\R)$ in Paragraph \ref{subsecter8}. One has the following key proposition.
\begin{proposition}
\label{Propter01}
Let $\x_A\in \R^A$. Then
\[(\widetilde{M}^\I(\x_A),\widetilde{Q}^\I(\x_A))\in\widetilde{\M}_{A}(\R),\]
\[\widetilde{M}^\I(\x_A) = \widetilde{M}_\I^\I(\x_A)\quand \widetilde{Q}^\I(\x_A) = \widetilde{Q}^\I_\I(\x_A),\]
\[\forall B\subset A,\;\;\forall I\in\I,\quad M^{\I_B}_I(\x_B) = M(\x_{I\cap B})\quand  Q^{\I,2B}_I(\x_{A,A}) = Q^{\I,2(B\cap I)}(\x_{A,A}).\]
Moreover, the set
\[\enstq{(\widetilde{M}^\I(\x_A),\widetilde{Q}^\I(\x_A))}{\x_A\in \Delta_{\I,\eta}}\]
is relatively compact in $\widetilde{\M}^*_{A}(\R)$.
\end{proposition}
\begin{proof}
The first three assertions directly follow from the definition of $\widetilde{M}^\I$ and $\widetilde{Q}^\I$. As for the second proposition, let $\x_A\in\Delta_{\I,\eta}$. According to Lemma \ref{Lemmater12}, the coefficients of the matrix $M^{\I_B}(\x_B)$ are bounded by a constant $C(\eta)$. We also have $\x_{A,A}\in \Delta_{\I,\eta}$. Lemma \ref{Lemmater14} applies, and the set of matrices $(Q^{\I,2B}(\x_{A,A}))_{\x_A\in \Delta_{\I,\eta}}$ is relatively compact in $\M_{2B,2A}^*(\R)$. The conclusion follows.
\end{proof}
\paragraph{Divided differences of a Gaussian process}\strut\jump

\noindent
At last we describe the covariance matrix of the divided difference vector of a Gaussian process. The integral representation given by the Hermite--Genocchi formula gives a convenient expression for the coefficients of the covariance matrix.\jump

Let $f$ be a Gaussian process of class $\CC^{|A|-1}$ on the interval $U$. We denote $r$ the covariance function of $f$, which is differentiable $|A|-1$ times in each variable. 
We define
\[\Sigma^\I(\x_A) = \Cov(f_\I[\x_A]).\numberthis\label{eqter:66}\]
\begin{lemma}
\label{Lemmater16}
Let $I,J\in \I$, $a\in I$ and $b\in J$. Then
\[\Sigma^\I(\x_A)_{ab} = \int_{C_{a|I}\times C_{b|J}}r^{(|\,a|I\,|-1\,,\,|\,b|J\,|-1)}\left(\sum_{i\in a|I}s_ix_i,\sum_{j\in b|J}t_jx_j\right)\dd m(\underline{s}_{i|I})\dd m(\underline{t}_{i|J}).\]
\end{lemma}
\begin{proof}
The Hermite Genocchi formula \ref{Lemmater07} asserts that
\begin{align*}
\Sigma^\I(\x_A)_{ab} &= \E[f[\x_{a|I}]f[\x_{b|J}]]\\
&= \int_{C_{a|I}\times C_{b|J}}\E\left[f^{(|\,a|I\,|-1)}\left(\sum_{i\in a|I}s_ix_i\right)f^{(|\,b|J\,|-1)}\left(\sum_{j\in b|J}t_jx_j\right)\right]\dd m(\underline{s}_{a|I})\dd m(\underline{t}_{b|J})\\
&= \int_{C_{a|I}\times C_{b|J}}r^{(|\,a|I\,|-1,|\,b|J\,|-1)}\left(\sum_{i\in a|I}s_ix_i,\sum_{j\in b|J}t_jx_j\right)\dd m(\underline{s}_{a|I})\dd m(\underline{t}_{b|J}).
\end{align*}
\end{proof}
\section{Kac--Rice formula for Gaussian processes}
\label{subsecter3}
For now on, $A$ is a finite set and $f$ is a centered Gaussian process defined on an interval $U$ of $\R$ or $\T$, with covariance function $r$. We assume for this section that the process $f$ is of class $\CC^{2|A|-1}$, and for every partition $\I$ of $A$ and $\y_\I \in U^{\I}\setminus \Delta$, the following non-degeneracy condition holds
\[\det\left[\Cov\left(\left(f^{(k)}(y_I)\right)_{\substack{0\leq k\leq 2|I|-1\\I\in \I}}\right)\right]>0.\numberthis\label{eqter:20}\]
It has been shown for instance in \cite[Thm. 3.6]{Aza09} that this condition ensures the finiteness of the $|A|$-th moment of the number of zeros of a Gaussian process on a bounded interval.
\subsection{Kac density and cumulants of the zeros counting measure}
In this section we give the expression of the $p$-th factorial moment and cumulant of the zeros counting measure. The first step is to lift the degeneracy of the Kac--Rice formula near the diagonal.
\subsubsection{Non-degeneracy of the Kac density near the diagonal}
We apply the method of divided differences to lift the degeneracy of the Kac density and give an alternative formula near the diagonal. The material is this section is quite standard, see for instance \cite{Aza09, Anc21}. Only Lemma \ref{Lemmater19} is new and allows us to express the Kac density as a function of a non-degenerate Gaussian vector. \jump

We fix for the rest of this paragraph a partition $\I$ of $A$.
\begin{lemma}
\label{Lemmater17}
The Gaussian vectors $f_\I[\x_A]$ and $f_\I[\x_{A,A}]$ are non-degenerate for $\x_A\in\Delta_{\I^+}$.
\end{lemma}
\begin{proof}
We prove first the non-degeneracy of $f_\I[\x_A]$. The process $f$ is of class $\CC^{|A|-1}$, and the Gaussian vector $f_\I[\x_A]$ is well-defined. Let $\x_A\in\Delta_{\I^+}$. By definition of the set $\Delta_{\I^+}$, there is a partition $\J$ finer than the partition $\I$ and such that $\x_A\in \Delta_\J$. We write $\x_A = \iota_\J(\y_\J)$ for some $\y_\J\in\R^\J\setminus\Delta$. For $J\in\J$, we have from equation \eqref{eqter:12}
\[f_\J[\x_A] = \left(\frac{f^{(k)}(y_J)}{k!}\right)_{\substack{0\leq k\leq |J|-1\\J\in \J}},\]
which is non-degenerate by the hypothesis \eqref{eqter:20}. Now from Equation \eqref{eqter:13},
\[f_\J[\x_A]= Q^{\I,\J}(\x_A)f_\I[\x_A].\]
According to Lemma \ref{Lemmater13}, the matrix $Q^{\I,\J}(\x_A)$ is invertible when $\x_A\in\Delta_\J$, which implies that the Gaussian vector $f_\I[\x_A]$ is non-degenerate for $\x_A\in\Delta_{\I^+}$.\jump

Now for the Gaussian vector $f_\I[\x_{A,A}]$, the process $f$ is of class $\CC^{2|A|-1}$ and the Gaussian vector $f_\I[\x_{A,A}]$ is well-defined. Moreover, if $\x_A\in\Delta_{\I^+}$ then $\x_{A,A}\in \Delta_{\I^+}$ and we can apply the previous case to the set $2A$ to deduce the non-degeneracy of the Gaussian vector $f_\I[\x_{A,A}]$.
\end{proof}

We define the random set
\[Z = \enstq{x\in U}{f(x)=0}.\]
By Bulinskaya lemma (see \cite[Lem. 1.20]{Aza09}) and the assumption on $f$, the subset $Z$ is almost surely a closed discrete subset of $U$ and we can define the random measure $\nu$ to be the counting measure of $Z$. The Kac--Rice formula (see \cite[Thm.~3.2]{Aza09} and \cite[Prop. 3.6]{Anc21}) asserts that for a measurable function $\Phi:U^A\rightarrow \R$ bounded with compact support, one has, following the notations of Section \ref{subsecter1},
\[\E[\langle \nu^{[A]},\Phi\rangle] = \int_{U^A} \Phi(\x_A)\rho(\x_A)\dd \x_A \numberthis\label{eqter:28},\]
with
\[\rho(\x_A):= \rho_{|A|}(\x_A) = \frac{N(\x_A)}{D(\x_A)},\]
where
\[N(\x_A) = \E\left[\prod_{a\in A}|f'(x_a)|\;\middle|\; \forall a\in A,\; f(x_a)=0\right]\quand D(\x_A) = \sqrt{\det\left[2\pi\Cov(f(x_A))\right]}.\]
The function $\rho$ is only defined for $\x_A\in U^A\setminus\Delta$. Along the diagonal $\Delta$, the function $N$ is ill-defined and the function $D$ cancels. The first step consists in giving an alternative non singular formula for $\rho$ in a neighborhood of the diagonal $\Delta$. 
Let $\x_A\in \Delta_{\I^+}$. We define
\[D^\I(\x_A) = \sqrt{\det[2\pi\Cov(f_\I[\x_A])]},\quad N^\I(\x_A) = \E\left[\prod_{I\in\I}\prod_{i\in I}\left|f[\x_I,x_i]\right| 
\;\middle|\; 
f_\I[\x_A]=0\right],\]
and
\[\rho^\I(\x_A) = \frac{N^\I(\x_A)}{D^\I(\x_A)}.\]
Lemma \ref{Lemmater17} implies that the three quantities above are well defined.
\begin{remark}
\label{Remarkter3}
If $\I= \overline{A}$, then $f_{\overline{A}}(\x_A) = f(\x_A)$ and $f[x_a,x_a] = f'(x_a)$, which implies that 
\[\rho(\x_A) = \rho^{\overline{A}}(\x_A).\]
\end{remark}
One has the following relations.
\begin{lemma}
\label{Lemmater18}
Let $\J$ be a finer partition than $\I$. Then for $\x_A\in \Delta_{\J^+}$ one has
\begin{align*}
D^\J(\x_A) &= \left|\det Q^{\I,\J}(\x_A)\right|D^\I(\x_A),\\
N^\J(\x_A) &= \left(\det Q^{\I,\J}(\x_A)\right)^2N^\I(\x_A),\\
\rho^\J(\x_A) &= \left|\det Q^{\I,\J}(\x_A)\right|\rho^\I(\x_A).
\end{align*}
\end{lemma}

It implies that the function $\rho^\J$ can be extended by continuity from $\Delta_{\J^+}$ to $\Delta_{\I^+}$ via these relations. By taking $\I = \{A\}$ and $\J = \overline{A}$, it implies that the function $\rho$ can be extended by continuity to the whole space $U^A$ and is bounded. Moreover, one has
\[\det Q^{\{A\},\overline{A}}(\x_A) = \det M(\x_A) = \prod_{a<b} (x_b-x_a),\]
thus the function $\rho$ vanishes on the diagonal $\Delta$.
\begin{proof}
Let $\x_A\in \Delta_{\J^+}$. From Equation \eqref{eqter:13},
\[f_\J[\x_A] = Q^{\I,\J}(\x_A)f_\I[\x_A].\]
We deduce that
\begin{align*}
D^\J(\x_A) &= \sqrt{\det\left[2\pi\Cov(f_\J[\x_A])\right]}\\
&=\left|\det Q^{\I,\J}(\x_A)\right|D^\I(\x_A).\numberthis\label{eqter:14}
\end{align*}
The quantities $N_A^\I(\x_A)$ and $N_A^\J(\x_A)$ are well defined. The Gaussian vectors $f_\I[\x_A]$ and $f_\J[\x_A]$ are equal up to a linear invertible change of variable, and they cancel simultaneously. In other words, one has
\begin{align*}
N^\J(\x_A) &= \E\left[\prod_{J\in\J}\prod_{j\in J}\left|f[\x_J,x_j]\right| 
\;\middle|\; 
f_\J[\x_A]=0\right]\\
&= \E\left[\prod_{J\in\J}\prod_{j\in J}\left|f[\x_J,x_j]\right| 
\;\middle|\; 
f_\I[\x_A]=0\right]\\
&= \E\left[\prod_{I\in\I}\prod_{J\in\J_I}\prod_{j\in J}\left|f[\x_J,x_j]\right| 
\;\middle|\; 
f_\I[\x_A]=0\right].
\end{align*}
Let $I\in\I$, $J\in\J_I$ and $j\in J$. From Lemmas \ref{Lemmater10} and \ref{Lemmater11}, conditionally on $f_I[\x_I] = 0$ one has
\[f[\x_J,x_j] = \left(\prod_{\substack{i\in I\setminus J}} (x_j-x_i)\right)f[\x_I,x_j],\]
from which we deduce
\begin{align*}
N^\J(\x_A) &= \left(\prod_{I\in\I}\prod_{J\in \J_I}\prod_{j\in J}\prod_{i\in I\setminus J} |x_i-x_j|\right)\E\left[\prod_{I\in\I}\prod_{i\in I}|f[\x_I,x_i]|\;\middle|\; f_\I[\x_A]=0\right]\\
&= \left(\prod_{I\in\I}\prod_{\substack{J_1,J_2\subset \J_I\\J_1 \neq J_2}}\prod_{j_1\in J_1}\prod_{j_2\in J_2}|x_{j_1}-x_{j_2}|\right)N^\I(\x_A).\numberthis\label{eqter:15}
\end{align*}
We deduce the alternative expression for $\rho^\J$ from \eqref{eqter:14} and \eqref{eqter:15}.
\end{proof}
When the points $(\x_a)_{a\in A}$ collapse on the diagonal $\Delta_\I$ the vector $(f[\x_I,\x_i])_{I\in\I,i\in I}$ becomes degenerate, which makes unpractical the analysis of regularity of the function $\rho^\I$ in a neighborhood of the diagonal $\Delta_\I$. The following lemma gives another expression of the quantity $N^\I(\x_A)$ that depends fully on a non-degenerate Gaussian vector. Recall the definition of the function $f^{[\x_B]}$ for a subset $B$ of $A$ in \eqref{eqter:16}.
\begin{lemma}
\label{Lemmater19}
One has
\[N^\I(\x_A) = \E\left[\prod_{I\in\I}\prod_{i\in I}\left|\left(M(\x_I)f^{[\x_I]}_I[\x_I]\right)_i\vphantom{\int}\right| 
\;\middle|\; 
f_\I[\x_A]=0\right].\]
\end{lemma}
\begin{proof}
Let $I\in\I$. According to formula \eqref{eqter:05},
\[f^{[\x_I]}(\x_I) = M(\x_I)f^{[\x_I]}_I[\x_I].\]
The conclusion follows from the definition of $N^\I(\x_A)$.
\end{proof}
\subsubsection{Expression of the cumulants of the zeros counting measure}
We are now ready to give the expression of the cumulant of order $|A|$ of the linear statistics associated to zeros counting measure. Let $(\phi_a)_{a\in A}$ be a collection of bounded functions with compact support. We define
\[\kappa_A(\nu)(\underline{\phi}_A) = \kappa\left(\left(\langle \nu,\phi_a \rangle\right)_{a\in A}\right),\]
the joint cumulant of the family of random variables $(\langle \nu,\phi_a \rangle)_{a\in A}$. We define the \textit{cumulant Kac density} associated with the set $A$ to be the function
\begin{align*}
F_A : \R^A&\longrightarrow \R\\
\x_A &\longrightarrow \sum_{\J\in \Pt_A}(|\J|-1)!(-1)^{|\J|-1}\prod_{J\in\J}\rho(\x_J).\numberthis\label{eqter:50}
\end{align*}
The following Proposition \ref{Propter02} express the cumulant of order $|A|$ of the linear statistics associated to zeros counting measure. It is key step in towards proof of Theorem \ref{Theoremter9}, and reveals the elegant interplay between the factorial power counting measure and the combinatorics of cumulants. The formula is quite standard in the study of $k$-point function of point processes, see for instance \cite[Claim 4.3]{Naz10}.
\begin{proposition}
\label{Propter02}
We have
\[\kappa_A(\nu)(\underline{\phi}_A) = \sum_{\I\in\Pt_A} \int_{U^\I}\left(\prod_{I\in\I}\prod_{i\in I} \phi_i(x_I)\right)F_\I(\x_\I)\dd\x_\I.\]
\end{proposition}
\begin{proof}
We have, using the expression of cumulants in terms of moments given by \eqref{eqter:68} and the notation \eqref{eqter:18}
\[\kappa_A(\nu)(\underline{\phi}_A) = \sum_{\I\in\Pt_A}(|\I|-1)!(-1)^{|\I|-1}\prod_{I\in\I}\E\left[\left\langle \nu^I,\underline{\phi}_I^\otimes \right\rangle\right].\]
The link between the power measure and factorial power measure given by Lemma \ref{Lemmater01} implies that
\[\E\left[\left\langle \nu^I,\underline{\phi}_I^\otimes \right\rangle\right] = \sum_{\J\in\Pt_I}\E\left[\left\langle \nu^{[\J]},\underline{\phi}_I^\otimes\circ \iota_\J \right\rangle\right].\]
The bijection given by \eqref{eqter:63} then implies
\begin{align*}
\kappa_A(\nu)(\underline{\phi}_A) &= \sum_{\I\in\Pt_A}(|\I|-1)!(-1)^{|\I|-1}\sum_{\J\preceq \I}\prod_{I\in\I}\E\left[\left\langle \nu^{[\J_I]},\underline{\phi}_I^\otimes\circ \iota_{\J_I} \right\rangle\right]\\
&= \sum_{\J\in \Pt_A}\sum_{\I\succeq \J}(|\I|-1)!(-1)^{|\I|-1}\prod_{I\in\I}\E\left[\left\langle \nu^{[\J_I]},\underline{\phi}_I^\otimes\circ \iota_{\J_I} \right\rangle\right].
\end{align*}
The Kac--Rice formula then asserts that
\[\E\left[\left\langle \nu^{[\J_I]},\underline{\phi}_I^\otimes\circ \iota_{\J_I} \right\rangle\right] = \int_{U^{\J_I}}\left(\prod_{\substack{J\in\J\\J\subset I}}\prod_{j\in J}\phi_j(x_J)\right)\rho(\x_{\J_I})\dd\x_{\J_I},\]
from which we deduce that
\begin{align*}
\kappa_A(\nu)(\underline{\phi}_A) &= \sum_{\J\in \Pt_A}\int_{U^\J}\left(\prod_{J\in\J}\prod_{j\in J} \phi_j(x_J)\right)\sum_{\I\succeq \J}(|\I|-1)!(-1)^{|\I|-1}\prod_{I\in\I}\rho(\x_{\J_I})\dd\x_{\J_I}\\
&= \sum_{\J\in \Pt_A}\int_{U^\J}\left(\prod_{J\in\J}\prod_{j\in J} \phi_j(x_J)\right)F_\J(\x_\J)\dd\x_\J,
\end{align*}
where the last equality follows from the bijection given by \eqref{eqter:63}.
\end{proof}
For instance if $|A| = 2$ then the second order cumulant coincides with the variance and
\[\kappa_A(\nu)(\underline{\phi}_A) = \left(\int_{U^2} \phi_1(x)\phi_2(y)\left[\rho_2(x,y)-\rho_1(x)\rho_1(y)\right]\dd x\dd y\right) + \left(\int_U\phi_1(x)\phi_2(x)\rho_1(x)\dd x\right).\]
\subsection{Matrix representation of the Kac density and factorization property}
\label{subsecter6}
In this section we prove a matrix representation for the Kac density and the cumulant Kac density. It allows us to dissociate the analysis of the covariance matrix of divided differences associated with the Gaussian process $f$, and of the Kac density seen as a functional of the covariance matrix.
\subsubsection{Matrix representation of the Kac density}
We define the mapping
\begin{align*}
\widetilde{\rho}:\M_A(\R)\times\Ss_{2A}^+(\R)\,&\longrightarrow \R\\
M\times\Sigma\quad&\longmapsto \frac{1}{\sqrt{\det(2\pi\Sigma)}}\int_{\R^A}\prod_{a\in A}\left|(M\y_A)_a\right| \exp\left(-\frac{^T\!\y_A(\Sigma^c)^{-1}\y_A}{2}\right)\dd \y_A.
\end{align*}
Recall from definition \eqref{eqter:66} that
\[\Sigma^\I(\x_A) = \Cov(f_\I[\x_A])\quand \Sigma^\I(\x_{A,A}) = \Cov(f_\I[\x_{A,A}]).\]
The following lemma gives an alternative expression of $\rho$ as a function of the covariance matrix $\Sigma^\I(\x_{A,A})$, and the matrix of divided differences $M^\I(\x_A)$ defined in \eqref{eqter:06}.
\begin{lemma}
\label{Lemmater22}
For $\x_A\in\Delta_{\I^+}$,
\[\rho(\x_A) = \left|\det M^\I(\x_A)\right|\widetilde{\rho}\left(M^\I(\x_A),\Sigma^\I(\x_{A,A})\right).\]
\end{lemma}
\begin{proof}
Note first that Remark \ref{Remarkter3} and Lemma \ref{Lemmater18} implies that
\[\rho(\x_A) = \left|\det Q^{\I,\overline{A}}(\x_A)\right|\rho^\I(\x_A) = \left|\det M^\I(\x_A)\right|\rho^\I(\x_A).\]
Let $I\in\I$. In virtue of Lemma \ref{Lemmater10}, one has 
\[f^{[\x_I]}_I[\x_I] = (f[\x_I,x_{i|I}])_{i\in I}\quad\text{and thus}\quad \left(f_I[\x_I],f^{[\x_I]}_I[\x_I]\right) = f_{2I}[\x_{I,I}].\]
Following the notations of Section \ref{subsecter2} it implies that
\[\Sigma^\I(\x_{A,A})^{11} = \Cov(f_\I[\x_A]).\]
From Equation \eqref{eqter:23}, one has
\[\det \left[2\pi\Sigma^\I(\x_{A,A})\right]=\det\left[2\pi\Sigma^\I(\x_{A,A})^{11}\right]\det\left[2\pi\Sigma^\I(\x_{A,A})^c\right].\]
Using the alternative expression of $N^\I$ given by Lemma \ref{Lemmater19} and the conditional formula of Lemma \ref{Lemmater05}, we deduce
\begin{align*}
\rho^\I(\x_A) &= \frac{1}{\sqrt{\det \left[2\pi\Sigma^\I(\x_{A,A})^{11}\right]}}\E\left[\prod_{I\in\I}\prod_{i\in I}\left|\left(M(\x_I)f^{[\x_I]}_I[\x_I]\right)_i\vphantom{\int}\right| 
\;\middle|\; 
f_\I[\x_A]=0\right]\\
&= \frac{1}{\sqrt{\det\left[2\pi\Sigma^\I(\x_{A,A})\right]}}\int_{\R^A}\prod_{a\in A}\left|(M^\I(\x_A)\y_A)_a\right| \exp\left(-\frac{^T\!\y_A(\Sigma^\I(\x_{A,A})^c)^{-1}\y_A}{2}\right)\dd \y_A\\
&= \widetilde{\rho}\left(M^\I(\x_A), \Sigma^\I(\x_{A,A})\right).
\end{align*}
The conclusion follows.
\end{proof}
\begin{lemma}
\label{Lemmater23}
Let $\x_A\in\Delta_{\I^+}$. For a subset $B$ of $A$,
\[\rho(\x_B) = \left|\det M^{\I_B}(\x_B)\right|\widetilde{\rho}\left(M^{\I_B}(\x_B),Q^{\I,2B}(\x_{A,A})\Sigma^\I(\x_{A,A})\,\!^T\!Q^{\I,2B}(\x_{A,A})\right).\]
\end{lemma}
\begin{proof}
Recall that the partition $\I_B$ of $B$ is the partition induced by the partition $\I$ on the subset $B$. If $\x_A\in\Delta_{\I^+}$ then $\x_B\in\Delta_{\I_B^+}$. We can thus apply the previous Lemma \ref{Lemmater22} to get
\[\rho(\x_B) = \left|\det M^{\I_B}(\x_B)\right|\widetilde{\rho}\left(M^{\I_B}(\x_B),\Sigma^{\I_B}[\x_B]\right).\]
From Equation \eqref{eqter:48},
\[f_{\I_B}[\x_{B,B}] = Q^{\I,2B}(\x_{A,A})f_\I[\x_{A,A}],\]
thus
\[\Sigma^{\I_B}[\x_B] = Q^{\I,2B}(\x_{A,A})\Sigma^\I(\x_{A,A})\,\!^T\!Q^{\I,2B}(\x_{A,A}).\]
\end{proof}

Given two open subsets $\Omega_1$ and $\Omega_2$ of finite dimensional vector spaces, we define the function space $\CC^{0,\infty}(\Omega_1,\Omega_2)$ to be the set of functions from $\Omega_1\times\Omega_2$ to $\R$, that are infinitely differentiable with respect to the second argument and such that the partial derivatives (with respect to the second argument) are jointly continuous. We endow this space with the usual topology of uniform convergence of second partial derivatives to any order on compact subsets of $\Omega_1\times\Omega_2$.  
\begin{lemma}
\label{Lemmater24}
The application $\widetilde{\rho}$ belongs to $\CC^{0,\infty}(\M_A(\R),\Ss_{2A}^+(\R))$.
\end{lemma}
\begin{proof}
Let
\[h(\Sigma,\y_A) =  \frac{1}{\sqrt{\det(2\pi\Sigma)}}\exp\left(-\frac{^T\!\y_A(\Sigma^c)^{-1}\y_A}{2}\right).\]
The function $\Sigma\mapsto h(\Sigma,\y_A)$ is infinitely differentiable on $\Ss_{2A}^+(\R)$ and its partial derivatives are also exponentially decreasing with respect to the variable $\y_A$. By differentiability under the integral, it implies that $\widetilde{\rho}$ belongs to $\CC^{0,\infty}(\M_A(\R),\Ss_{2A}^+(\R))$.
\end{proof}
\subsubsection{Factorization of the Kac density and error term}
In this section, we show that the function $\widetilde{\rho}$ satisfies a nice factorization property. This is a rigorous statement of the approximation \eqref{eqter:67} stated in introduction. For the rest of this section, $\I$ is a fixed partition of the set $A$. 
\begin{lemma}
\label{Lemmater25}
Let $M\in \M_A(\R)$ and $\Sigma\in \Ss_{2A}^+(\R)$ such that $M = M_\I$ and $\Sigma = \Sigma_\I$. Then
\[\widetilde{\rho}(M,\Sigma)=\prod_{I\in\I}\widetilde{\rho}(M_I,\Sigma_I).\]
\end{lemma}
\begin{proof}
Since the matrix $\Sigma$ is block diagonal with respect to the partition $\I$,
\[\sqrt{\det \left(2\pi\Sigma\right)} = \prod_{I\in\I} \sqrt{\det \left(2\pi\Sigma_I\right)}.\]
Similarly, for $\y_A\in\R^A$,
\[\exp\left(-\frac{^T\!\y_A(\Sigma^c)^{-1}\y_A}{2}\right) = \prod_{I\in \I} \exp\left(-\frac{^T\!\y_I((\Sigma_I)^c)^{-1}\y_I}{2}\right).\]
The matrix $M_\I$ is also block diagonal with respect to the partition $\I$ and
\[\prod_{a\in A}|(M\y_A)_a| = \prod_{I\in\I}\prod_{i\in I}|(M_I\y_I)_i|.\]
The conclusion follows from the definition of $\widetilde{\rho}$.
\end{proof}
We now want to describe the error term in Lemma \ref{Lemmater25} after perturbation of the block-diagonal matrix $\Sigma$. We start with the following lemma.
\begin{lemma}
\label{Lemmater26}
Let $K$ be a compact subset of $\M_A(\R)\times \Ss_{2A}^+(\R)$. There is a constant $C_K$ such that for all $(M,\Sigma)\in K$ with $M=M_\I$, one has
\[\left|\widetilde{\rho}(M,\Sigma) - \widetilde{\rho}(M,\Sigma_\I)\right| \leq C_K \|\Sigma - \Sigma_\I\|^2.\]
\end{lemma}
\begin{proof}
We set $H = \Sigma - \Sigma_\I$. The matrix $H$ is symmetric and satisfies $H_\I = 0$. It implies that
\[(\Sigma^{-1}H)_\I = (\Sigma^{-1}H\Sigma^{-1})_\I = 0\quad\text{and thus}\quad\Tr(\Sigma^{-1}H) = 0.\numberthis\label{eqter:29}\]
One has from identity \eqref{eqter:24}
\begin{align*}
[(\Sigma + H)^c]^{-1} &= [(\Sigma + H)^{-1}]^{22}\\
&= (\Sigma^c)^{-1} - H_\Sigma + O(\|H\|^2),
\end{align*}
where $H_\Sigma = [\Sigma^{-1}H\Sigma^{-1}]^{22}$ and the big-oh is uniform on the compact $K$. By \eqref{eqter:29}, one has $(H_\Sigma)_\I = 0$. Differentiation under the integral sign gives
\[\widetilde{\rho}(M,\Sigma+H) =  \widetilde{\rho}(M,\Sigma) + \dd\widetilde{\rho}_{(M,\Sigma)}.H + O(\|H\|^2),\]
where
\begin{align*}
\dd\widetilde{\rho}_{(M,\Sigma)}.H &= -\frac{1}{2\sqrt{\det\left(2\pi\Sigma\right)}}\int_{\R^A}\prod_{a\in A}\left|(M\y_A)_a\right| \left(^T\!\y_AH_\Sigma\y_A\right)\exp\left(-\frac{^T\!\y_A(\Sigma^c)^{-1}\y_A}{2}\right)\dd \y_A\\
&= -\sum_{\substack{i,j\in A\\ [i]_\I\neq[j]_\I}} \frac{(H_\Sigma)_{ij}}{2\sqrt{\det\left(2\pi\Sigma\right)}}\int_{\R^A}\prod_{a\in A}\left|(M\y_A)_a\right| y_iy_j\exp\left(-\frac{^T\!\y_A(\Sigma^c)^{-1}\y_A}{2}\right)\dd \y_A.
\end{align*}
For each $i,j$ of the sum we make the change of variable 
\[\forall a\in A,\quad z_a = \left\lbrace\begin{array}{ll}
-y_a\; \text{if}\; [a]_\I = [i]_\I\\
\;\;\; y_a\; \text{if}\; [a]_\I \neq [i]_\I
\end{array}\right..\]
Since $M$ and $\Sigma^c$ are block diagonal matrices , one has for $a\in A$ and $\y_A\in\R^A$
\[ |(M\z_A)_a| = |(M\y_A)_a|,\quad ^T\!\z_A(\Sigma^c)^{-1}\z_A = ^T\!\y_A(\Sigma^c)^{-1}\y_A\quad\text{but}\quad z_iz_j = -y_iy_j.\]
Thus
\[ \dd\widetilde{\rho}_{(M,\Sigma)}.H = - \dd\widetilde{\rho}_{(M,\Sigma)}.H = 0,\]
and the conclusion follows.
\end{proof}

We can now state the following proposition that gives the error in Lemma \ref{Lemmater25} when the matrix $\Sigma$ is not block diagonal with respect to the partition $\I$. Note that the following Lemma gives a quadratic error in the matrix coefficients of $\Sigma$, where in the somehow analogous Proposition \cite[prop. 6.43]{Anc21} only proves a square root error. The difference resides in Lemma \ref{Lemmater19}, which allows us to bypass the lack of regularity of Gaussian integrals near the boundary of the cone of symmetric definite matrices.
\begin{corollary}
\label{Corter1}
Let $B$ be a subset of $A$ and $K$ be a compact subset of $\M_B(\R)\times\M_{2B,2A}^*(\R)\times\Ss_{2A}^+(\R)$. Then there is a constant $C_K$ such that, for all $(M,Q,\Sigma) \in K$ such that $M=M_\I$ and $Q=Q_\I$, one has
\[\left|\widetilde{\rho}(M,Q\Sigma\,\!^T\!Q)-\prod_{I\in\I}\widetilde{\rho}(M_I,Q_I\Sigma_I\,\!^T\!Q_I)\right|\leq C_K \sup_{\substack{I,J\in \I\\I\neq J}}\|\Sigma_{I,J}\|^2.\]
\end{corollary}
\begin{proof}
Let $\Pi = Q\Sigma\,\!^T\!Q$. Lemma \ref{Lemmater04} asserts that the couple $(M,\Pi)$ lives in a compact set of $\M_B(\R)\times\Ss_{2B}^+(\R)$.
From Lemma \ref{Lemmater26}, one has
\[\left|\widetilde{\rho}(M,\Pi)- \widetilde{\rho}(M,\Pi_\I)\right| = O(\|\Pi-\Pi_\I\|^2).\]
By Lemma \ref{Lemmater24}, the application $\widetilde{\rho}$ belongs to $\CC^{0,\infty}(\M_B(\R)\times\Ss_{2B}^+(\R))$. Lagrange remainder formula asserts the existence of a constant $C_K$ such that
\[\left|\widetilde{\rho}(M,\Pi) - \widetilde{\rho}(M,\Pi_\I)\right| \leq C_K \|\Pi-\Pi_\I\|^2\leq C_K\sup_{\substack{I,J\in \I\\I\neq J}}\|\Pi_{I,J}\|^2.\]
Since $Q=Q_\I$ and $\|Q\|\leq C_K$ for some constant $C_K$, we deduce
\[\|\Pi_{I,J}\| = \|Q_I\Sigma_{I,J}\,\!^T\!Q_J\|\leq C_K \|\Sigma_{I,J}\|.\]
Finally the conclusion follows from Lemma \ref{Lemmater25} since
\[\widetilde{\rho}(M,\Pi_\I) = \prod_{I\in\I}\widetilde{\rho}(M_I,\Pi_I) = \prod_{I\in\I}\widetilde{\rho}(M_I,Q_I\Sigma_I\,\!^T\!Q_I).\]
\end{proof}
\subsubsection{Matrix representation of the cumulant Kac density}
\label{subsecter9}
Similarly to the Kac density, we can derive a matrix representation for the cumulant Kac density defined in \eqref{eqter:50}. Note that the divided differences do not behave well when we consider them on a subfamily of interpolations points $(\x_A)_{a\in A}$. It explains why we introduced in Paragraph \ref{subsecter8} the somehow complicated set $\widetilde{\M}_A^*(\R)$. We introduce the function $\widetilde{F}_A$ defined by
\begin{align*}
&\widetilde{F}_A:\widetilde{\M}_A^*(\R)\times\Ss_{2A}^+(\R)\longrightarrow \R\\
&(\widetilde{M},\widetilde{Q})\times\Sigma\longrightarrow \sum_{\J\in \Pt_A}(|\J|-1)!(-1)^{|\J|-1}\prod_{J\in\J}\left|\det M^J\right|\widetilde{\rho}\,(M^J,Q^J\Sigma\,\!^TQ^J).
\end{align*}
Let $\I$ be a partition of $A$. The following lemma gives an alternative expression to the function $F_A$ when $\x_A\in\Delta_{\I^+}$.
\begin{lemma}
\label{Lemmater30}
For $\x_A\in\Delta_{\I^+}$ one has
\[F_A(\x_A) = \widetilde{F}_A\left((\widetilde{M}^\I(\x_A),\widetilde{Q}^\I(\x_A)),\Sigma^\I(\x_{A,A})\right).\]
\end{lemma}
\begin{proof}
One has
\[F_A(\x_A) = \sum_{\J\in \Pt_A}(|\J|-1)!(-1)^{|\J|-1}\prod_{J\in\J}\rho(\x_J).\]
According to lemma \ref{Lemmater23}, for a subset $J$ of $A$ one has
\[\rho(\x_J) = \left|\det M^{\I_J}(\x_J)\right|\widetilde{\rho}\left(M^{\I_J}(\x_J),Q^{\I,2J}(\x_{A,A})\Sigma^\I(\x_{A,A})\,\!^T\!Q^{\I,2J}(\x_{A,A})\right).\]
The first statement follows from the definition \eqref{eqter:27} of $\widetilde{M}^\I(\x_A)$ and $\widetilde{Q}^\I(\x_A)$.
\end{proof}
Given the definition of the function $\widetilde{F}_A$, one can translate the cancellation property of Lemma \ref{Lemmater52} to this function. It is the object of the following lemma.
\begin{lemma}
\label{Lemmater15}
Let $\I$ be a partition of $A$, with $\I\neq \{A\}$. Let $(\widetilde{M},\widetilde{Q})\in \widetilde{\M}_A(\R)$ and $\Sigma\in \Ss_{2A}(\R)$ such that $\widetilde{M} = \widetilde{M}_\I$, $\widetilde{Q} = \widetilde{Q}_\I$, $\Sigma = \Sigma_\I$ and
\[\forall B\subset A,\;\;\forall I\in\I,\quad M^B_I = M^{I\cap B}\quand  Q^B_I = Q^{I\cap B}.\]
Then
\[\widetilde{F}_A((\widetilde{M},\widetilde{Q}),\Sigma)=0.\]
\end{lemma}
\begin{proof}
For a subset $B$ be a subset of $A$, we set
\[m_B = |\det M^B|\widetilde{\rho}(M^B,Q^B\Sigma^TQ^B).\]
Then from Lemma \ref{Lemmater25} one has
\begin{align*}
m_B &= |\det M^B|\;\widetilde{\rho}(M^B,Q^B\Sigma_I^TQ^B)\\
&= \prod_{I\in\I}|\det M^{I\cap B}|\;\widetilde{\rho}(M^{I\cap B},Q^{I\cap B}\Sigma^TQ^{I\cap B})\\
&= \prod_{I\in\I} m_{I\cap B}
\end{align*}
From Lemma \ref{Lemmater52}, one deduce that
\begin{align*}
\widetilde{F}_A((\widetilde{M},\widetilde{Q}),\Sigma) = \sum_{\J\in \Pt_A}(|\J|-1)!(-1)^{|\J|-1}\prod_{J\in\J}m_J = 0.
\end{align*}

\end{proof}
Corollary \ref{Corter1} translates directly into the following bound for the function $\widetilde{F}_A$. In the following, $K$ is a compact subset of $\widetilde{\M}_A^*(\R)\times\Ss_{2A}^+(\R)$.
\begin{lemma}
\label{Lemmater33}
Let $\I$ be a partition of $A$, with $\I\neq \{A\}$. Then there is a constant $C_K$ such that for all $((\widetilde{M},\widetilde{Q}),\Sigma) \in K$ with $\widetilde{M} = \widetilde{M}_\I\quad \widetilde{Q} = \widetilde{Q}_\I$, and
\[\forall B\subset A,\;\;\forall I\in\I,\quad M^B_I = M^{I\cap B}\quand  Q^B_I = Q^{I\cap B},\]
one has
\[\left|\widetilde{F}_A((\widetilde{M},\widetilde{Q}),\Sigma)\right|\leq C_K\sup_{\substack{I,J\in \I\\I\neq J}}\|\Sigma_{I,J}\|^2.\]
\end{lemma}
\begin{proof}
From Lemma \ref{Lemmater15}, one has
\[\widetilde{F}_A((\widetilde{M},\widetilde{Q}),\Sigma_\I) = 0.\]
Since the function $\widetilde{F}_A$ is a polynomial expression in the functions $\widetilde{\rho}$, the error term given by Corollary \ref{Corter1} translates directly for the function $\widetilde{F}_A$ to the desired estimate.
\end{proof}
\subsection{Decay of the cumulant Kac density}
\label{subsecter7}
The goal of the following section is to improve the quadratic bound given by Lemma \ref{Lemmater33}. We will do so, thanks to a refinement of Taylor expansion for regular functions that cancel on given affine subspaces. The next key Lemma \ref{Lemmater40} bounds the function $\widetilde{F}_A$ by a sum over a collection of graphs. We recall first a few definitions and propositions from graph theory.
\subsubsection{Graph setting}
A graph $G$ is a couple $(E(G),V(G))$, where $E(G)$ is the set of vertices of the graph $G$ and $V(G)$ the collection of edges of $G$. For our purposes, a graph $G$ has no loop, but two different edges can have the same endpoints. The multiplicity of an edge $\{a,b\}$ is the number of edges in the graph that are equal to $\{a,b\}$. We say that a graph $G$ is \textit{$2$-edge connected} if the multiplicity of any edge is at most two, and the graph $G$ remains connected when any one of its edges is removed. We define $\G_A$ to be the set of $2$-edge connected graphs with set of vertices $A$. Notice that this set has finite cardinal.\jump

Let $\I$ be a partition of $A$ and let $G$ be a graph with set of vertices $A$. We define the graph $G_\I$ on the set of vertices $\I$ to be the quotient graph with respect to the partition $\I$. That is, the multiplicity of the edge $\{I,J\}$ (with $I\neq J$) of $G_\I$ is the number of edges $\{i,j\}$ in $G$ (with multiplicity) such that $\{I,J\} = \{[i]_\I,[j]_\I\}$.
\begin{lemma}
\label{Lemmater20}
Let $H\in\G_\I$. There is $G\in\G_A$ such that
\[H = G_\I.\]
\end{lemma}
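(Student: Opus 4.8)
The plan is to produce $G$ by lifting the edges of $H$ one at a time and then reinforcing each fibre $I\in\I$ so that it becomes internally $2$-edge-connected. Concretely: for every edge $\{I,J\}$ of $H$ with $I\neq J$, of multiplicity $m$, I add to $G$ exactly $m$ edges, each joining an arbitrarily chosen vertex of $I$ to an arbitrarily chosen vertex of $J$ (the precise choice is irrelevant); call these the \emph{external} edges of $G$. Then, for each cell $I\in\I$, I add \emph{internal} edges: a Hamiltonian cycle through the vertices of $I$ if $|I|\geq 3$, a double edge if $|I|=2$, and nothing if $|I|=1$. By construction the external edges reproduce each edge of $H$ with the correct multiplicity, while the internal edges are recorded as loops in the quotient and hence discarded; therefore $G_\I=H$, as desired.

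It then remains to verify $G\in\G_A$, i.e. that $G$ is connected and has no bridge. Each cell $I$, viewed with its internal edges, is connected (a cycle, a double edge, or a single vertex), and $H$ is connected (being $2$-edge-connected) with at least one external edge lying over each edge of $H$; hence $G$ is connected. For the bridge-freeness, let $e\in E(G)$. If $e$ is internal to a cell $I$, removing it from the cycle (resp. from the double edge) leaves $I$ connected and changes nothing else, so $G-e$ is connected. If $e$ is external and lies over an edge $\{I,J\}$ of $H$, then the quotient of $G-e$ is $H$ with the multiplicity of $\{I,J\}$ lowered by one, which is connected because $H$ has edge connectivity at least $2$; since every fibre of $G-e$ is still internally connected, $G-e$ is connected. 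Thus $G$ has no bridge, so $G\in\G_A$.

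If $\G_A$ is taken to mean edge connectivity \emph{exactly} $2$, one adds the observation that a minimal $2$-edge-cut of $H$ lifts to a $2$-edge-cut of $G$: deleting from $G$ the two external edges lying over the two cut edges of $H$ produces a graph whose quotient is disconnected, hence the graph itself is disconnected, so the edge connectivity of $G$ is at most $2$, and combined with the previous paragraph it equals $2$. The genuinely degenerate cases $|\I|=1$ (take $G$ a cycle or a double edge on $A$) and $|A|\leq 2$ are handled directly.

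The construction is elementary, and I do not expect a real obstacle; the one point deserving attention is that the internal edges are genuinely needed — a vertex of a cell $I$ incident to no external edge would otherwise be isolated or the endpoint of a bridge — yet they must not disturb the quotient, which is exactly why the internal structure is chosen among within-cell edges, since such edges contribute only loops to $G_\I$ and are not counted.
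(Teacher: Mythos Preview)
Your construction is correct and is essentially the same as the paper's: replace each vertex $I$ of $H$ by a cycle on the elements of $I$ and lift each edge of $H$ to an arbitrary external edge between the corresponding cells. You are simply more careful than the paper in treating the small cases $|I|\le 2$, in spelling out the bridge-freeness argument via the quotient, and in addressing the ``exactly $2$'' versus ``at least $2$'' reading of $\G_A$.
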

\begin{proof}
For each $I\in\I$, we replace in $H$ the vertex $I$ by the cycle $(i)_{i\in I}$. The neighbors of $I$ are arbitrary linked to vertices of this cycle. The obtained graph with set of vertices $A$ satisfies $G_\I = H$ and is $2$-edge connected.
\end{proof}

An \textit{ear} of a graph $G$ is a path in $G$ such that its internal vertices all have degree two. Note that a cycle is a particular instance of an ear. An \textit{ear decomposition} of the graph $G$ is a union $(P_1,\ldots,P_k)$ such that $P_1$ is a cycle, and for $i\geq 2$, $P_i$ is an ear such that its endpoints belong to $\cup_{j<i} P_i$. We states the following standard fact for $2$-edge connected graphs (see \cite{Whi32}). The proof is a simple induction on the number of ears.
\begin{lemma}
A $2$-edge connected graph admits an ear decomposition. The number of ears is necessary the circuit rank of the graph $G$. Moreover, the starting cycle can be chosen arbitrarily among the cycles of $G$.
\end{lemma}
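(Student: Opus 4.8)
The plan is to build the decomposition up from the prescribed cycle, one ear at a time, maintaining $2$-edge-connectivity as a loop invariant and tracking the circuit rank $\mu(H) = |E(H)| - |V(H)| + 1$. Fix any cycle $C$ of $G$ and set $P_1 = C$; the arbitrariness of $C$ is precisely the last assertion of the lemma, so it costs nothing to keep it general. Inductively, suppose that after $i$ steps we have produced a subgraph $H_i = P_1\cup\cdots\cup P_i\subseteq G$ which is $2$-edge-connected, contains $C$, and satisfies $\mu(H_i) = i$ (true for $i=1$ since $\mu(C)=1$). If $E(H_i) = E(G)$ we stop: $G$ is connected, so every vertex is incident to an edge and hence $V(H_i) = V(G)$ and $H_i = G$. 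Otherwise, since $G$ is connected there is an edge $e = \{u,x\}\in E(G)\setminus E(H_i)$ with $u\in V(H_i)$.

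The heart of the inductive step is to extract the next ear $P_{i+1}$. If $x\in V(H_i)$, take $P_{i+1}$ to be the single edge $e$. If $x\notin V(H_i)$, use that $G$ is $2$-edge-connected: $e$ is not a bridge, so it lies on a cycle $\Gamma$ of $G$; traversing $\Gamma$ from $u$ along $e$ until the first return to a vertex of $V(H_i)$ yields a path $P_{i+1} = u v_1\cdots v_j$ whose endpoints lie in $V(H_i)$, whose internal vertices $v_1,\dots,v_{j-1}$ lie outside $V(H_i)$, and whose edges all lie outside $E(H_i)$ (the first by the choice of $e$, the rest because each is incident to a vertex not in $V(H_i)$). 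If this first return occurs only back at $u$, then $P_{i+1}$ is a \emph{closed} ear, i.e.\ a cycle hanging off $H_i$ at the single vertex $u$, which is allowed here because we only require $2$-edge-connectivity. In every case $H_{i+1} := H_i\cup P_{i+1}$ is connected, and every edge of $H_{i+1}$ lies on a cycle of $H_{i+1}$: the old edges already did, and each new edge lies on the cycle obtained by closing $P_{i+1}$ through any $H_i$-path between its two endpoints (such a path exists since $H_i$ is connected, and it meets $P_{i+1}$ only at those endpoints because the internal vertices of $P_{i+1}$ are new). A connected bridgeless graph is $2$-edge-connected, so $H_{i+1}$ is $2$-edge-connected; and since $P_{i+1}$ adds $\ell\ge 1$ edges together with $\ell-1$ new vertices, $\mu(H_{i+1}) = \mu(H_i) + 1 = i+1$.

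Each step increases the edge count, so the process terminates at some $H_k = G$, and then $k = \mu(H_k) = \mu(G)$; this simultaneously gives the existence of an ear decomposition, the fact that the number of ears equals the circuit rank, and the freedom in the starting cycle. The existence of an initial cycle itself is standard: a $2$-edge-connected graph on at least two vertices has minimum degree at least $2$ (a vertex of degree $0$ would disconnect it, one of degree $1$ would create a bridge), hence contains a cycle. The only point that needs care — and which I would flag as the single subtle issue in an otherwise elementary argument — is the handling of closed ears: the notion of an ear as a ``path'' must be understood to permit the two endpoints to coincide, since a $2$-edge-connected graph need not be $2$-vertex-connected (take two cycles glued at a point), so such ears are genuinely unavoidable; with this reading the construction above goes through verbatim. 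A downward induction on $\mu(G)$ peeling off one ear at a time would prove the same statement, but the upward construction is cleaner for the assertion about the starting cycle.
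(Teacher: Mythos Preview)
Your proof is correct and complete. The paper does not actually prove this lemma: it records it as a standard fact, cites Whitney, and writes only that ``the proof is a simple induction on the number of ears.'' Your upward construction is the standard argument and matches what that one-line hint gestures at; the bookkeeping with $\mu(H_i)=i$ cleanly delivers both the existence and the count, and starting from an arbitrary cycle handles the final clause for free.

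Your flag about closed ears is well taken and worth keeping. The paper defines an ear as a \emph{path}, which on a strict reading excludes cycles; but for $2$-edge-connected (as opposed to $2$-vertex-connected) graphs, closed ears are genuinely unavoidable, as your two-cycles-glued-at-a-point example shows. Since the paper works throughout with the set $\G_A$ of graphs of edge connectivity $2$ (multigraphs, possibly with cut vertices), the statement only holds with the broader reading you adopt. This is a real, if minor, imprecision in the paper's definition rather than a flaw in your argument.
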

It implies the following lemma.
\begin{lemma}
\label{Lemmater50}
Let $G$ be a $2$-edge connected graph. There is a family $(T_a)_{a\in A}$ of spanning trees of $G$ such that for every edge $e\in E(G)$, one can find an element $a_e\in A$ such that $e$ is not an edge of the spanning tree $T_{a_e}$.
\end{lemma}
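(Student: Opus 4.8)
The plan is to build each spanning tree $T_a$ from an ear decomposition of $G$, and to make the whole family fit together by fixing, once and for all, an auxiliary orientation of the edges. First I would take any ear decomposition $(P_0,\dots,P_m)$ of $G$ provided by Lemma~\ref{Lemma50}, orient the starting cycle $P_0$ cyclically, and orient every subsequent ear monotonically along its path. Exactly as in the elementary proof of Robbins' theorem, this yields an orientation of $G$ in which every vertex has out-degree at least one: each vertex of $P_0$ gets an out-edge inside $P_0$, and each new internal vertex of an ear $P_i$ gets an out-edge inside $P_i$, while the two endpoints of $P_i$ already lie on an earlier ear. For $e\in E(G)$ let $a_e$ be the head of $e$ for this orientation; then $a_e$ is one of the two endpoints of $e$, and for every vertex $v$ the set of edges pointing into $v$ is a \emph{proper} subset of the set $\delta(v)$ of edges incident to $v$, because $v$ carries at least one outgoing edge, which I call $h_v$.

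Next, for each fixed $a\in A$ I would construct $T_a$ so that its unique edge at $a$ is the outgoing edge $h_a$. Since $G$ is $2$-edge connected the edge $h_a$ lies on a cycle $C$, and by Lemma~\ref{Lemma50} one may choose an ear decomposition of $G$ whose starting cycle is exactly $C$; in particular $a\in C$, so $a$ is never an internal vertex of a later ear. I then form $T_a$ by keeping $h_a$, discarding the other edge of $C$ at $a$, and discarding from each later ear exactly one edge, chosen incident to $a$ whenever $a$ is an endpoint of that ear and arbitrary otherwise. A short induction along the ear decomposition shows that the resulting set of edges is a spanning tree of $G$ — each ear minus one edge attaches its new internal vertices as a path hanging off a vertex already in the tree — and by construction the tree-degree of $a$ stays equal to one throughout. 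Hence $a$ is a leaf of $T_a$ and $T_a\cap\delta(a)=\{h_a\}$.

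With these two constructions in hand the covering property falls out at once. Given $e\in E(G)$, the vertex $a_e$ is an endpoint of $e$, so $e\in\delta(a_e)$; but $e$ is an incoming edge of $a_e$, whereas the only edge of $T_{a_e}$ incident to $a_e$ is the outgoing edge $h_{a_e}$, which is different from $e$. Therefore $e\notin T_{a_e}$, which is precisely the claim.

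The delicate point, and the one I expect to cost the most care, is the middle step: checking that $a$ really keeps tree-degree one. If $a$ is an endpoint of several later ears, or — in the multigraph setting — the attachment vertex of a closed ear, then discarding one edge per ear may fail to prevent $a$ from acquiring a second tree-edge. This is exactly the obstruction measured by whether $G-a$ is connected; it is harmless for the graphs relevant here, which carry no cut vertex, but in full generality one should either impose that $G$ is $2$-connected or arrange the ear decomposition so that the interior of no ear meets $a$. The orientation step, by contrast, is completely robust, and it is the mechanism that forces every edge to be dropped by at least one member of the family.
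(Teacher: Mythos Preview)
Your route is genuinely different from the paper's. The paper fixes a \emph{single} ear decomposition $(P_1,\dots,P_k)$ starting from a \emph{longest} cycle $P_1$ with vertex set $B$; since $|B|\ge |E(P_i)|$ for every ear, one can choose surjections $\tau_i:B\twoheadrightarrow E(P_i)$ with $\tau_1(a)$ incident to $a$, and for $a\in B$ the tree $T_a$ is obtained by deleting $\tau_i(a)$ from each $P_i$. The covering property then comes from the surjectivity of the $\tau_i$, not from an orientation; for $a\notin B$ the paper simply picks any spanning tree with $a$ a leaf. Your Robbins-type orientation is more conceptual and makes the covering mechanism transparent, while the paper's surjection trick is more ad hoc but needs only one ear decomposition instead of one per vertex.

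You are right to flag the leaf step, and the difficulty you isolate is real: if $a$ is a cut vertex of a $2$-edge connected multigraph (two triangles glued at a point, say), then $G-a$ is disconnected and no spanning tree has $a$ as a leaf, so the statement is literally false as written. Your escape clause that ``the graphs relevant here carry no cut vertex'' is not substantiated --- $\G_A$ is defined purely through edge connectivity and contains such examples --- and the paper's own argument shares exactly the same gap: for $a\in B$ it never controls the edges of later ears landing at $a$, and for $a\notin B$ it simply asserts that $a$ can be made a leaf. Two remarks sharpen this. First, your worry about $a$ being an endpoint of several \emph{open} ears is misplaced: discarding the ear's unique edge at $a$ leaves a path hanging off the other endpoint, so $a$ gains no tree-degree; only a \emph{closed} ear based at $a$ causes trouble, i.e.\ precisely the cut-vertex situation. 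Second, the leaf clause is in fact never used downstream: in Lemma~\ref{Lemma51} the change of variables $(x_b)_{b\in A}\mapsto (x_a,(x_b-x_c)_{\{b,c\}\in E(T_a)})$ is unimodular for any spanning tree and any choice of root, so the clean repair is simply to drop the leaf requirement and keep only the covering property, which both your orientation argument and the paper's surjection argument establish without difficulty.
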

\begin{proof}
Let $P_1$ be a largest cycle in $G$, with vertices $B$, and $(P_1,\ldots,P_k)$ be an ear decomposition of $G$. For $i\geq 1$, we define $E_i$ the set of edges of the path $P_i$. One has $|B|\geq |E_i|$, so that one can find a surjection $\tau_i:B\twoheadrightarrow E_i$.\jump

For $a\notin B$, we define $T_a$ to be any spanning tree of $G$. For $a\in B$, we define $T_a$ to be the graph $G$ where we removed, in each path $E_i$, the edge $\tau_i(a)$. The number $k$ is the circuit rank of the graph $G$. By construction, the graph $T_a$ is connected, so it must be a spanning tree of the graph $G$. Every edge $e\in E_i$ is the image of some element $a_e\in B$ by the surjection $\tau_i$, so that the edge $e$ does not belong to the tree $T_{a_e}$.
\end{proof}
\subsubsection{Crossed Taylor formula}
In this paragraph we prove an enhancement of the Taylor remainder estimates for regular functions that cancel on affine subspaces. A simple observation of this phenomenon is the following. Assume that $F(x,y)$ is a regular function such that in a neighborhood of zero,
\[|F(x,y)|\leq |x|\quand |F(x,y)|\leq |y|.\]
Then for some constant $C$, one has in a neighborhood of zero that
\[|F(x,y)|\leq C|xy|,\]
which improves by a square factor the trivial bound $\sqrt{|xy|}$. We wish to extend this observation to the more complicated function $\widetilde{F}_A$ that satisfies the bounds given by Lemma \ref{Lemmater33} for several partitions $\I$ of $A$. In the following, we give a general statement for this phenomenon.\jump

Let $\Omega$ be an open subset of a finite dimensional vector space $V\simeq \R^E$, $F$ an infinitely differentiable function on $\Omega$, and $\y_E$ be a vector in $V$. We fix an integer $d\in\N$. The following lemma states equivalent conditions for a regular function $F$ to cancel on an affine subspace with order of cancellation at least $d$.
\begin{lemma}
\label{Lemmater38}
Let $B$ be a subset of $E$. Then the three following conditions are equivalent.
\begin{enumerate}
\item For every compact subset $K$ of $\Omega$, there is a constant $C_K$ such that,
\[\forall \x_E\in K,\quad|F(\x_E)|\leq C_K\left(\sup_{b\in B} |x_b-y_b|\right)^d.\]
\item For every multi-index $\n_B\in \N^B$ with $|\n_B|=d$, there exists a function $H_{\n_B}\in \CC^\infty(\Omega)$ such that
\[\forall \x_E\in \Omega,\quad F(\x_E) = \sum_{|\n_B|=d}(\x_B-\y_B)^{\n_B}\;H_{\n_B}(\x_E).\]
\item For all $\w_E\in \Omega$ such that $\w_B = \y_B$, for every multi-index $\m_B\in \N^B$ with $|\m_B|<d$,
\[\partial^{\m_E} F(\w_E) = 0.\]
\end{enumerate}
\end{lemma}

\begin{proof} We can assume that $\Omega$ is a product of open intervals. The general case follows by a partition of unity. We denote by $\Omega_B$ the projection of $\Omega$ to $\R^B$.
\begin{itemize}
\item[$\bullet$] $(2)\Rightarrow (1)$ follows from the boundedness of the functions $H_{\n_B}$ on compact subsets $K$ of $\Omega$.
\item[$\bullet$] $(1)\Rightarrow (3)$ is a consequence of the uniqueness of the polynomial approximation given by Taylor expansion.
\item[$\bullet$] $(3)\Rightarrow (2)$, we distinguish two cases. If $\y_B\in\Omega_B$, then the implication a direct consequence of Taylor expansion with integral remainder of the function $F$ on the segment between points $\x_E$ and $(\x_{E\setminus B}, \y_B)$. If $\y_B\notin\Omega_B$, then there is an index $b\in B$ such that $y_b\notin \Omega_{\{b\}}$. We can then define
\[H(\x_E) = \frac{F(\x_E)}{(x_b-y_b)^d},\quad\text{so that}\quad F(\x_E) = (x_b-y_b)^dH(\x_E).\]
\end{itemize}
\end{proof}
Now we extend the previous Lemma \ref{Lemmater38} to a collection $\B$ of (not necessarily disjoints) subsets of $E$. For a fix positive integer $d$ we define
\[\mathcal{C}_\B = \enstq{\n_E\in \{0,\ldots,d\}^E}{\forall B\in \B,\; |\n_B|\geq d}.\numberthis\label{eqter:35}\]
For instance, if $E=\{1,2,3\}$, $\B = \{\{1,2\},\{2,3\},\{1,3\}\}$ and $d = 2$, then
\[\CC_\B  = \left\{(2,2,0),(0,2,2),(2,0,2),(1,1,1),\;\ldots\;\right\}.\]
\begin{proposition}
\label{Propter04}
Assume that for every $B\in\B$, the function $F$ satisfies the equivalent statements of Lemma \ref{Lemmater38}. Then there exists finitely many non-zero functions $(H_{\n_E})_{\n_E\in \mathcal{C}_\B}\in \CC^\infty(\Omega)$ and such that
\[F(\x_E) = \sum_{\n_E\in\mathcal{C}_\B}(\x_E-\y_E)^{\n_E}H_{\n_E}(\x_E).\]
\end{proposition}
\begin{proof}
Once again, we can assume that the $\Omega$ is a product of open intervals, and for a subset $B$ of $E$ we denote by $\Omega_B$ the projection of $\Omega$ to $\R^B$. The proof is a induction on the size of the set $\B$. If $\B = \lbrace B\rbrace$, this exactly the hypothesis on $F$ (second characterization in Lemma \ref{Lemmater38}). Now let $B\in \B$ and suppose that the lemma is true for the family $\B\setminus\lbrace B\rbrace$. We have 
\[F(\x_E) = \sum_{\n_E\in\mathcal{C}_{\B\setminus \lbrace B\rbrace}}(\x_E-\y_E)^{\n_E}H_{\n_E}(\x_E).\]
As in the proof of \ref{Lemmater38}, we distinguish two cases. Assume first that $\y_B\in\Omega_B$, and let $\w_E\in \Omega$ such that $\w_B = \y_B$. For every multi-index $\m_B\in \N^B$ with $|\m_B|<d$,
\begin{align*}
\partial^{\m_B} F(\w_E) &= \sum_{\n_E\in\mathcal{C}_{\B\setminus \lbrace B\rbrace}}\partial^{\m_B}\left((\, .\,-\y_E)^{\n_E}H_{\n_E}(\, .\,)\right)(\w_E)\\ &= \sum_{\substack{\n_E\in\mathcal{C}_{\B\setminus \lbrace B\rbrace}\\ \n_B\leq \m_B}}(\w_{E\setminus B}-\y_{E\setminus B})^{\n_{E\setminus B}}\;\frac{\m_B!}{(\m_B-\n_B)!}\;\partial^{(\m_B-\n_B)}H_{\n_E}(\w_E)\\
&= 0,
\end{align*}
according to the third characterization in Lemma \ref{Lemmater38}. Let $\w_E = (\x_{E\setminus B},\y_B)$. On cannot directly use Lemma \ref{Lemmater38} to the functions $H_{\n_E}$ because it is not guaranteed that they satisfy one of the equivalent propositions, but it will be the case if we subtract to $H_{\n_E}$ its Taylor expansion. For $\n_E\in \mathcal{C}_{\B\setminus\{B\}}$ and $\x_E\in\Omega$ we define the quantity 
\[\widetilde{H}_{\n_E}(\x_E) = H_{\n_E}(\x_E)\;-\sum_{|\m_B|<d-|\n_B|} \frac{(\x_B-\y_B)^{\m_B}}{(\m_B)!}\;\partial^{(\m_B)}H_{\n_E}(\w_E).\]
If $|\n_B|\geq d$, then $H_{\n_E} = \widetilde{H}_{\n_E}$ and $\n_E\in\mathcal{C}_\B$. If $|\n_B|<d$, then by Taylor expansion with integral remainder (or directly by $(3)\Rightarrow (2)$ of Lemma \ref{Lemmater38}), there exists functions $(H_{\n_E\,,\,\p_B})_{|p_B|=d-|n_B|}$ such that
\[\widetilde{H}_{\n_E}(\x_E) = \sum_{|p_B|=d-|n_B|} (\x_B-\y_B)^{\p_B}H_{\n_E\,,\,\p_B}(\x_E)\numberthis\label{eqter:01}.\] 
Now we compute
\begin{align*}
F(\x_E) &= F(\x_E) - \sum_{|\m_B|<d} \frac{(\x_B-\y_B)^{\m_B}}{\m_B!}\;\partial^{\m_B} F(\w_E)\\
&= \sum_{\n_E\in\mathcal{C}_{\B\setminus \lbrace B\rbrace}}(\x_E-\y_E)^{\n_E}\left(H_{\n_E}(\x_E)\;-\sum_{\substack{|\m_B|<d\\ \m_B\geq \n_B}} \frac{(\x_B-\y_B)^{\m_B-\n_B}}{(\m_B-\n_B)!}\;\partial^{(\m_B-\n_B)}H_{\n_E}(\w_E)\right)\\
&= \sum_{\n_E\in\mathcal{C}_{\B\setminus \lbrace B\rbrace}}(\x_E-\y_E)^{\n_E}\widetilde{H}_{\n_E}(\x_E)\\
&= \sum_{\n_E\in(\mathcal{C}_{\B\setminus \lbrace B\rbrace}\cap \,\mathcal{C}_B)}(\x_E-\y_E)^{\n_E}\widetilde{H}_{\n_E}(\x_E) \\
&\qquad+\qquad\sum_{\n_E\in(\mathcal{C}_{\B\setminus \lbrace B\rbrace}\setminus \,\mathcal{C}_B)}\sum_{|p_B|=d-|n_B|} (\x_{E\setminus B}-\y_{E\setminus B})^{\n_{E\setminus B}}(\x_B-\y_B)^{\n_B+\p_B}H_{\n_E\,,\,\p_B}(\x_E)
\end{align*}
One then have $|\n_B + \p_B|\geq d$ and thus the multi-index $(\n_{E\setminus B},\n_B + \p_B)$ belongs to $\mathcal{C}_\B$. The conclusion follows.\jump

If $\y_B\notin\Omega_B$, we can argue as in the proof of $(3)\Rightarrow (2)$ in Lemma \ref{Lemmater38} to get an expression for $H_{\n_{E}}(\x_E)$ similar to \eqref{eqter:01} and the conclusion direclty follows.
\end{proof}
The previous Proposition \ref{Propter04} directly implies the following corollary.
\begin{corollary}
\label{Corter2}
Let $K$ be a compact subset of $\Omega\subset\R^E$. If the function $F$ satisfies the hypotheses of Proposition \ref{Propter04}, then one can find a constant $C_K$ such that for all $\x_E$ in $K$,
\[|F(\x_E)| \leq C_K\sum_{\n_E\in\mathcal{C}_\B}|\x_E-\y_E|^{\n_E}.\]
\end{corollary}

For instance, let $E = \{1,2,3\}$. Let $F$ be an infinitely differentiable function such that for $(x,y,z)$ in any compact subset $K$ of $\R^3$,
\[|F(x,y,z)| \leq  x^2+y^2,\quad |F(x,y,z)| \leq  y^2+z^2\quand |F(x,y,z)| \leq  x^2+z^2.\]
Then the function $F$ satisfies the hypotheses of Proposition \ref{Propter04} with $\B = \{\{1,2\},\{2,3\},\{1,3\}\}$ and $d=2$. It implies the existence of a constant $C_K$ such that for $(x,y,z)\in K$,
\[|F(x,y,z)|\leq C_K\left(x^2y^2 + y^2z^2 + x^2z^2 + |xyz|\right).\]
\begin{remark}
Let $\Omega^1$ be an open subset of a finite dimensional vector space and assume that $F\in \CC^{0,\infty}(\Omega^1,\Omega)$ (this function space is defined before Lemma \ref{Lemmater24}). Then Proposition \ref{Propter04} remains true if one replace $\CC^\infty(\Omega)$ by $\CC^{0,\infty}(\Omega^1,\Omega)$ and the proof is in all points similar.
\end{remark}
We now apply the previous Corollary \ref{Corter2} to the function $\widetilde{F}_A$.
\begin{lemma}
\label{Lemmater40}
Let $\I$ be a partition of $A$ and $K$ be a compact subset of $\widetilde{\M}_A(\R)\times\Ss_{2A}^+(\R)$. Then there is a constant $C_K$ such that for all $((\widetilde{M},\widetilde{Q}),\Sigma) \in K$ with $\widetilde{M} = \widetilde{M}_\I\quad \widetilde{Q} = \widetilde{Q}_\I$, and
\[\forall B\subset A,\;\;\forall I\in\I,\quad M^B_I = M^{I\cap B}\quand  Q^B_I = Q^{I\cap B},\]
one has
\[\left|\widetilde{F}_A((\widetilde{M},\widetilde{Q}),\Sigma)\right|\leq C_K\sum_{G\in \G_\I} \prod_{\{I,J\}\in E(G)} \|\Sigma_{I,J}\|.\]
\end{lemma}
\begin{proof}
The proposition is trivial if $\I=\{A\}$, and we can assume that $\I\neq\{A\}$. The proof is an application of Corollary \ref{Corter2}. To this end, we define for subsets $B,C$ of $A$ the set 
\[B\circ C = \enstq{\vphantom{\int^a}\{(k,b),(l,c)\}}{k,l\in \{1,2\}\,,\quad b\in B, c\in C}.\]
Then the set $V = \Ss_{2A}(\R)$, endowed with its canonical basis, can be naturally identified with $\R^{A\circ A}$. For $\J\in\Pt_A$ with $\J\succeq\I$ we define
\[B_{\J} = \enstq{\vphantom{\int^a}\{(k,a),(l,b)\}\in A\circ A}{[a]_\J\neq [b]_\J},\]
and
\[\mathcal{B}_\I = \enstq{\vphantom{\int^a}B_{\J}}{\I\preceq \J\prec \{A\}}.\]
The set $B_\J$ encodes the indices of the coefficients in the off-diagonal blocks with respect to the partition $\J$. As a consequence of Lemma \ref{Lemmater24} the function $\widetilde{F}_A$ is in $\CC^{0,\infty}(\widetilde{\M}_A^*(\R),\Ss_{2A}^+(\R))$. Let $\J$ be a partition such that $\I\preceq \J\prec \{A\}$. The assumption on $\widetilde{M}$ and $\widetilde{Q}$ imply that
\[\forall B\subset A,\quad \forall J\in\J,\quad M_J^B = M^{J\cap B}\quand Q_J^B = Q^{J\cap B}.\]
According to Lemma \ref{Lemmater33} there is a constant $C_K$ (that may change from line to line) such that
\begin{align*}
\left|\widetilde{F}_A((\widetilde{M},\widetilde{Q}),\Sigma)\right|&\leq C_K\sup_{\substack{I,J\in \J\\ I\neq J}}\|\Sigma_{I,J}\|^2\\
&\leq C_K\sup_{\{(k,a),(l,b)\}\in B_{\J}}|\Sigma_{ab}^{kl}|^2.
\end{align*}
The function $\widetilde{F}$ then satisfies the hypotheses of Corollary \ref{Corter2} with $d=2$ and family of subsets $\mathcal{B}_\I$, from which we deduce the existence of a constant $C_K$ such that
\begin{align*}
\left|\widetilde{F}_A((\widetilde{M},\widetilde{Q}),\Sigma)\right| &\leq C_K\sum_{\n\in\mathcal{C}_{\B_\I}}\prod_{\substack{e\in A\circ \!A\\ e=\{(k,a),(l,b)\}}}|\Sigma_{ab}^{kl}|^{n_e}\\
&\leq C_K\sum_{\n\in\mathcal{C}_{\B_\I}}\prod_{\substack{e\in A\circ\! A\\ e=\{(k,a),(l,b)\}}}\|\Sigma_{[a]_\I,[b]_\I}\|^{n_e}\\
&\leq C_K\sum_{\n\in\mathcal{C}_{\B_\I}}\prod_{\substack{I,J\in \I\\ I\neq J}}\|\Sigma_{I,J}\|^{|\n_{I\circ J}|}.\numberthis\label{eqter:36}
\end{align*}
To every multi-index $\n\in \mathcal{C}_{\B_\I}$ (which can be seen as a symmetric matrix of size $|A|$ with coefficient in $\{0,1,2\}$), we can associate a graph $G^{\n}$ with set of vertices $\I$, and where the multiplicity of the edge $\{I,J\}$ is given by the number $|\n_{I\circ J}|$. From the definition of the set $\mathcal{C}_{\B_\I}$, any partition into two disjoints subsets of the vertices $\I$ in the graph $G^{\n}$ must be linked with at least two edges. it follows that the graph $G^{\n}$ is two-edge connected and subsequently belongs to the set $\G_\I$. Following inequality \eqref{eqter:36}, one has
\[\left|\widetilde{F}_A((\widetilde{M},\widetilde{Q}),\Sigma)\right| \leq C_K\sum_{G\in\G_\I}\prod_{\lbrace I,J\rbrace\in E(G)}\|\Sigma_{I,J}\|.\]
\end{proof}
\section{Asymptotics of the cumulants of the zeros counting measure}
We are now in position to study the asymptotics of the cumulants of the zeros counting measure associated with a sequence of processes $(f_n)_{n\in\NN}$. We first prove that the non-degeneracy condition \eqref{eqter:20} holds uniformly for $n\in \NN$, which allows us to translate the previous Lemma \ref{Lemmater40} to the cumulant Kac density $F_{A,n}$ associated with the sequence $(f_n)_{n\in\NN}$.
\subsection{Uniform non-degeneracy of the covariance matrix}
Up to now, we assumed that the generic process $f$ that we considered satisfied the non-degeneracy condition \eqref{eqter:20}. For stationary Gaussian processes, this non-degeneracy condition is true under very mild assumptions on the process. For non-stationary processes there seems to be no simple conditions that ensure the validity of \eqref{eqter:20}. Nevertheless in our case of interest, we consider a sequence of Gaussian processes that converges in distribution towards a stationary Gaussian process and we are able to prove some uniform non-degeneracy condition in this setting.\jump

In this subsection, $A$ denotes a finite set and $\I$ a partition of $A$. For $n\in\oNN$, we consider $f_n$ a Gaussian process defined on $nU$. We will use the notations introduced in Section \ref{subsecter3}. In the following, we fix a positive number $\eta$ and we consider the subsequent partition $(\Delta_{\I,\eta})_{\I\in \Pt_A}$ of $(nU)^A$. In particular we consider the quantities $\rho_{A,n}$, $F_{A,n}$, $\Sigma^\I_n(\x_A)$, etc. associated with the process $f_n$. \jump

We assume for now that the sequence $(f_n)_{n\in\oNN}$ satisfies hypotheses $H_1(q)$ and $H_2(q)$ defined in \eqref{eqter:64} and \eqref{eqter:65}, with $q=|A|-1$. In particular the quantity $\Sigma^\I_n(\x_A) = \Cov((f_n)_\I[\x_A])$ is well defined. Since the function $g$ of hypothesis $H_2(q)$ decreases to zero, then for $\varepsilon>0$, one can find a constant $T_\varepsilon$ such that
\[\enstq{x\in\R}{g(x)\geq \varepsilon}\subset [-T_\varepsilon,T_\varepsilon].\numberthis\label{eqter:37}\]
The main proposition of this section is the following.
\begin{proposition}
\label{Propter03}
In the above setting, there is a compact set $K_\eta$ of $\Ss_A^+(\R)$ such that for all $n\in\oNN$ large enough, and $\x_A\in \Delta_{\I,\eta}$, the matrix $\Sigma^\I_n(\x_A)$ lives in $K_\eta$.
\end{proposition}

We prove first Proposition \ref{Propter03} for the limit stationary process $f_\infty$.
\begin{lemma}
\label{Lemmater41}
In the above setting, there is a compact set $K_\eta$ of $\Ss_A^+(\R)$ such that for all $\x_A\in \Delta_{\I,\eta}$, the matrix $\Sigma^\I_\infty(\x_A)$ lives in $K_\eta$.
\end{lemma}
\begin{proof}
According to Lemma \ref{Lemmater02}, one must show the existence of positive constants $C_\eta$ and $c_\eta$ such that
\[\forall \x_A\in \Delta_{\I,\eta},\quad \|\Sigma_\infty^\I(\x_A)\|\leq C_\eta\quand \det \Sigma_\infty^\I(\x_A) \geq c_\eta.\]
From the Hermite Genocchi formula \ref{Lemmater07} and Lemma \ref{Lemmater16}, we observe that the coefficients of the matrix $\Sigma_\infty^\I(\x_A)$ are bounded by $\|g\|_\infty$.
It remains to prove the uniform positiveness of $\det \Sigma_\infty^\I(\x_A)$ on $\Delta_{\I,\eta}$. \jump

The covariance function of $f_\infty$ decreases to zero by assumption. Since for Gaussian vectors, decorrelation implies independence, one see that the process $f_\infty$ is weakly mixing, which in turn implies ergodicity. By Maruyama theorem (see \cite{Mar49}), the spectral measure $\mu_\infty$ of $f_\infty$ has no atoms. It is then a standard fact that this observation implies the non-degeneracy condition \eqref{eqter:20}, and Lemma \ref{Lemmater17} implies that the Gaussian vector $(f_\infty)_\I[\x_A]$ is also non-degenerate for $\x_A\in \Delta_{\I,\eta}$.\jump
We prove the uniform lower bound for $\x_A\in\Delta_{\I,\eta}$ by induction on the size of the set $A$. If $|A| = 1$ it reduces to the fact that the process $f_\infty$ is non-degenerate. Assume that the property is true for every strict subset $B$ of $A$. Let $\J$ be another partition of $A$ such that $\J\succeq \I$, and $\varepsilon>0$. Following Equation \eqref{eqter:22} we introduce
\[\Delta_{\J,T_\varepsilon} = \enstq{\x_A\in U^A}{\J_{T_\varepsilon}(\x_A) = \J}.\]
We can assume that $T_\varepsilon \geq |A|\eta$. In that case, one has
\[\Delta_{\I,\eta} \subset \bigsqcup_{\J\succeq \I} \Delta_{\J,T_\varepsilon}.\]
In the case $\J = \{A\}$, one has for all $a,b\in A$ and $\x_A\in \Delta_{\{A\},T_\varepsilon}$ 
\[|x_a-x_b|\leq |A|\,T_\varepsilon.\]
The set $\Delta_{\I,\eta} \cap \Delta_{\{A\},T_\varepsilon}$ is not compact, but it is compact by translation in the sense that it is compact if one fixes one of the coordinates. This compactness property plus the stationarity of the process $f_\infty$ implies that one can find a positive constant $c_{\eta,\varepsilon}$ such that
\[\forall \x_J\in \Delta_{\I,\eta} \cap \Delta_{\{A\},T_\varepsilon},\quad\det \Sigma^\I(\x_A) \geq c_{\eta,\varepsilon}.\]
Now assume that $\J\neq \{A\}$. If $\x_A\in \Delta_{\J,T_\varepsilon}$ then for $a,b\in A$ such that $[a]_\J\neq [b]_\J$, and $u,v\leq |A|-1$,
\[|r_\infty^{(u,v)}(x_a-x_b)| \leq \varepsilon.\]
It implies that,
\[\sup_{\substack{I,J\in \J\\I\neq J}}\|\Sigma_\infty^\I(\x_A)_{I,J}\|\leq \varepsilon,\]
and thus
\[\|\Sigma_\infty^\I(\x_A) - (\Sigma_\infty^\I(\x_A))_\J\|\leq \varepsilon.\]
Since the determinant is a smooth function of the matrix coefficients and the matrix $\Sigma_\infty^\I(\x_A)$ is bounded, we deduce the existence of a constant $C_\eta$ such that for $\x_A\in \Delta_{\J,T_\varepsilon}$,
\begin{align*}
\left|\det \Sigma_\infty^\I(\x_A) - \det \,(\Sigma_\infty^\I(\x_A))_\J\right|&= \left|\det \Sigma_\infty^\I(\x_A) - \prod_{J\in\J}\det \Sigma_\infty^{\I_J}(\x_J)\right|\\
&\leq C_\eta\varepsilon,
\end{align*}
and thus
\[\det \Sigma_\infty^\I(\x_A)\geq \prod_{J\in\J}\det \Sigma_\infty^{\I_J}(\x_J) - C_\eta\varepsilon.\]
For all $J\in\J$, the set $J$ is a strict subset of $A$. Moreover, if $\x_A\in\Delta_{\I,\eta}$ then $\x_J\in\Delta_{\I_J,\eta}$. By induction hypothesis, one can find a positive constant $c_\eta$ depending only on $\eta$ such that $\det \Sigma_\infty^{\I_J}(\x_J) \geq c_\eta$ when $\x_J\in\Delta_{\I_J,\eta}$. It implies that 
\[\forall \x_A\in \Delta_{\I,\eta} \cap \Delta_{\J,\varepsilon},\quad \det \Sigma^\I_\infty(\x_A) \geq(c_\eta)^{|\J|} - C_\eta\varepsilon.\]
Taking $\varepsilon$ small enough and gathering the case $\J = \{A\}$ and $\J\neq \{A\}$, the conclusion follows.
\end{proof}
\paragraph{Proof of Proposition \ref{Propter03}.}\strut\jump

\begin{proof}
A reformulation of hypothesis $H_1(q)$ applied to the compact set $[-T_\varepsilon,T_\varepsilon]$ yields
\[\lim_{n\rightarrow +\infty} \sup_{\substack{s,t\in nU\\|s-t|\leq T_\varepsilon}} \left|r_n^{(u,v)}(s,t) - \psi\left(\frac{s}{n}\right) r_\infty^{(u,v)}(s,t)\right| = 0.\]
The function $\psi$ is uniformly continuous by hypothesis and we can define $\omega_\psi$ its uniform modulus of continuity. By hypothesis, there are positive constants $c_\psi$ and $C_\psi$ such that for all $x\in U$,
\[c_\psi\leq \psi(x)\leq C_\psi.\numberthis\label{eqter:38}\]
Let $n\in\oNN$. If $s,t\in nU$ and $|t-s|>T_\varepsilon$ then hypothesis $H_2(q)$ implies that for $u,v\leq |A|-1$,
\[|r_n^{(u,v)}(s,t)| \leq \varepsilon.\numberthis\label{eqter:39}\]
Gathering \eqref{eqter:38} and \eqref{eqter:39}, there is $n_\varepsilon\in \NN$ such that for $n\geq n_\varepsilon$, and $s,t\in nU$
\[\left|r_n^{(u,v)}(s,t) - \psi\left(\frac{s}{n}\right) r_\infty^{(u,v)}(s,t)\right|\leq \varepsilon(1+C_\psi).\numberthis\label{eqter:40}\]
Let $n\in\oNN$ with $n\geq n_\varepsilon$ and $\x_A\in \Delta_{I,\eta}$. For $I,J\in\I$, $a\in I$ and $b\in J$ one has from Lemma \ref{Lemmater16}
\[\Sigma_n^\I(\x_A)_{ab} = \int_{C_{a|I}\times C_{b|J}}r_n^{(|\,a|I\,|-1\,,\,|\,b|J\,|-1)}\left(\sum_{i\in a|I}s_ix_i,\sum_{j\in b|J}t_jx_j\right)\dd m(\underline{s}_{a|I})\dd m(\underline{t}_{b|J}).\]
Inequality \eqref{eqter:40} implies
\[\left|\Sigma_n^\I(\x_A)_{ab}- \psi\left(\frac{x_a}{n}\right)\Sigma_\infty^\I(\x_A)_{ab}\right| \leq \varepsilon(1+C_\psi) + R_n(a,b),\numberthis\label{eqter:42}\]
where
\begin{align*}
R_n(a,b) &= \|g\|_\infty\int_{C_{a|I}\times C_{b|J}}\left|\psi\left(\frac{x_a}{n}\right)-\psi\left(\frac{\sum_{i\in a|I}s_ix_i}{n}\right)\right|\dd m(\underline{s}_{a|I})\dd m(\underline{t}_{b|J})\\
&\leq C\sup_{\underline{s}_{a|I}\in C_{a|I}}\left|\psi\left(\frac{x_a}{n}\right)-\psi\left(\frac{\sum_{i\in a|I}s_ix_i}{n}\right)\right|.
\end{align*}
For $i\in I$, one has $|x_a-x_i|\leq |A|\eta$. It implies that for any convex combination $y$ of the variables $(x_i)_{i\in I}$ one also have $|x_a-y|\leq |A|\eta$. We deduce that
\[R_n(a,b)\leq C\omega_\psi\left(\frac{|A|\eta}{n}\right).\]
There is $n_{\eta,\varepsilon}$ such that for $n\geq n_{\eta,\varepsilon}$,
\[R_n(a,b)\leq \varepsilon,\]
and thus coming back to inequality \eqref{eqter:42},
\[\left|\Sigma_n^\I(\x_A)_{ab}- \psi\left(\frac{x_a}{n}\right)\Sigma_\infty^\I(\x_A)_{ab}\right| \leq \varepsilon(2+C_\psi).\numberthis\label{eqter:51}\]
It implies the existence of a constant $C_\eta$ such that for $n$ large enough and $\x_A\in \Delta_{\I,\eta}$,
\[\left|\det \Sigma_n^\I(\x_A) - \left(\prod_{a\in A}\psi\left(\frac{x_a}{n}\right)\right)\det \Sigma^\I_\infty(\x_A)\right|\leq  C_\eta\varepsilon.\]
We deduce that
\[\det \Sigma_n^\I(\x_A)\geq c_\psi^{|A|}\det \Sigma_\infty^\I(\x_A)-C_\eta\varepsilon.\]
The conclusion follows from the previous Lemma \ref{Lemmater41} covering the stationary case, and taking $\varepsilon$ small enough.
\end{proof}
As a consequence of the previous Proposition \ref{Propter03}, we deduce the following corollary about convergence of the Kac density associated with the process $f_n$.
\begin{corollary}
\label{Corter3}
Assume that the sequence of processes $(f_n)_{n\in\oNN}$ satisfies hypotheses $H_1(q)$ and $H_2(q)$ defined in \eqref{eqter:64} and \eqref{eqter:65}, with $q=2|A|-1$. Then there is a compact set $K_\eta$ of $\Ss_{2A}^+(\R)$ such that for all $n\in\oNN$ large enough and $\x_A\in \Delta_{\I,\eta}$,
\[\Sigma^\I_n(\x_{A,A})\in K_\eta.\]
In particular we have the following convergence, uniformly for $x\in U$ and $\underline{t}_A$ in compact subsets of $\R^A$
\[\lim_{n\rightarrow+\infty} \rho_n(nx+\underline{t}_A) = \rho_\infty(\underline{t}_A)\quand \lim_{n\rightarrow+\infty} F_{A,n}(nx+\underline{t}_A) = F_{A,\infty}(\underline{t}_A).\]
\end{corollary}
\begin{proof}
The first assertion is a direct application of Proposition \ref{Propter03} with the set $2A$, using the fact that $\x_{A,A}\in\Delta_{\I,\eta}$ when $\x_A\in\Delta_{\I,\eta}$. As for the second one, the proof of Proposition \ref{Propter03}, and in particular equation \eqref{eqter:51}, implies that for all partition $\I$ of $A$, one has the following convergence, uniformly for $x\in U$ and $\underline{t}_A$ in a bounded subset of $\Delta_{\I,\eta}$
\[\lim_{n\rightarrow +\infty} \Sigma^\I_n(nx+\underline{t}_{A,A}) = \psi(x)\Sigma^\I_\infty(\underline{t}_{A,A}).\]
The conclusion follows from the alternative expression for $\rho_n$ given by Lemma \ref{Lemmater22}. Note that the function $\rho_\infty$ does not depends on the function $\psi(x)$, by a change of variable.
\end{proof}
\subsection{Asymptotics of the cumulants}
Let $A$ be a finite set of cardinal $p$. We assume that the sequence of processes $(f_n)_{n\in\oNN}$ satisfies hypotheses $H_1(q)$ and $H_2(q)$ defined in \eqref{eqter:64} and \eqref{eqter:65}, with $q=2p-1$. We then choose $\eta = \frac{\omega}{2p}$ where $\omega$ is the parameter of hypothesis $H_2(q)$, so that
\[g_\omega(x) = \sup_{|u|\leq 2\eta p}g(x+u).\]
\subsubsection{Decay of the cumulant Kac density}
Let us now translate Lemma \ref{Lemmater40} to the cumulant Kac density $F_{A,n}$. The previous Corollary \ref{Corter3} ensures that the matrix $\Sigma_n^\I(\x_{A,A})$ lives in a compact subset of $\Ss_{2A}^+(\R)$ when $\x_A\in\Delta_{\I,\eta}$ and $n$ is large enough.
\begin{lemma}
\label{Lemmater42}
There is a constant $C$ such that for all $\x_A \in (nU)^A$,
\[\left|F_{A,n}(\x_A)\right|\leq C\sum_{G\in \G_A} \prod_{\{a,b\}\in E(G)} g_\omega(x_a-x_b).\]
\end{lemma}
\begin{proof}
Let $\I$ be a partition of $A$ and $\x_A\in \Delta_{\I,\eta}$. According to Corollary \ref{Corter3}, the matrix $\Sigma_n^\I(\x_{A,A})$, for $n$ large enough depending only on $\eta$, lives in a compact subset of $\Ss_{2A}^+(\R)$ depending only on the parameter $\eta$. By Proposition \ref{Propter01}, the element $(\widetilde{M}^\I(\x_A),\widetilde{Q}^\I(\x_A))$ also lives in a compact subset of $\widetilde{\M}_A^*(\R)$ that depends only on $\eta$. We can then apply Lemma \ref{Lemmater40} with $\Sigma = \Sigma_n^\I(\x_{A,A})$ and $(\widetilde{M},\widetilde{Q}) = (\widetilde{M}^\I(\x_A),\widetilde{Q}^\I(\x_A))$. Given the representation formula for $F_A$ given by Lemma \ref{Lemmater30}, we deduce the existence of a constant $C_\eta$ such that for all $\x_A\in \Delta_{\I,\eta}$,
\[\left|F_A(\x_A)\right|\leq C_\eta\sum_{G\in \G_\I} \prod_{\{I,J\}\in E(G)} \|(\Sigma_n^\I(\x_{A,A}))_{I,J}\|.\]
Let $H\in\G_\I$. According to Lemma \ref{Lemmater20}, there is a graph $G\in\G_A$ such that $G_\I=H$.
If we remove the edges $\{a,b\}$ of $G$ such that $[a]_\I = [b]_\I$, then there is a bijection between the edges of $G$ and the edges of $H$ given by the mapping
\[\{a,b\}\longrightarrow \{[a]_\I,[b]_\I\}.\]
Let $\{a,b\}$ an be edge of the graph $G$.
\begin{itemize}
\item If $[a]_\I = [b]_\I$ then $|x_a-x_b|\leq |A|\eta$. We deduce that
\[0<r_\infty(0)\leq g(0)\leq g_\omega(x_a-x_b).\]
\item If $[a]_\I \neq [b]_\I$ then from the Hermite-Genocchi formula and Lemma \ref{Lemmater16},
\[\|(\Sigma_n^\I(\x_{A,A}))_{I,J}\| \leq \sup_{|s|\leq 2\eta p} g(x_a-x_b + s)\leq g_\omega(x_a-x_b).\]
\end{itemize}
We deduce that
\begin{align*}
\prod_{\{I,J\}\in E(H)} \|(\Sigma_n^\I(\x_{A,A}))_{I,J}\| &\leq \prod_{\substack{\{a,b\}\in E(G)\\ [a]_\I\neq [b]_\I}} g_\omega(x_a-x_b)\\
&\leq C\prod_{\{a,b\}\in E(G)} g_\omega(x_a-x_b).
\end{align*}
We deduce the existence of a constant $C_\eta$ such that for $\x_A\in\Delta_{\I,\eta}$,
\[\left|F_A(\x_A)\right|\leq C_\eta \sum_{G\in \G_A} \prod_{\{a,b\}\in E(G)} g_\omega(x_a-x_b).\]
The inequality is true for every partition $\I$ of $A$ and the conclusion follows.
\end{proof}
\subsubsection{Convergence of the error term towards zero}
Recall from Definition \eqref{eqter:52} that $\nu_n$ is the random counting measure of the zero set of the Gaussian process $f_n(n\,.\,)$ defined on $U$. The previous Lemma \ref{Lemmater42} and the formula for the $p$-th cumulant given by Proposition \ref{Propter02} shows that the convergence of the cumulant reduces to the behavior of the quantity
\[I_n(G) = \int_{(nU)^A}\prod_{a\in A}\left|\phi_a\left(\frac{x_a}{n}\right)\right|\prod_{e=\{i,j\}\in E(G)} g_e(x_i-x_j)\dd\x_A,\numberthis\label{eqter:61}\]
where $G$ is a $2$-edge connected graph with set of vertices $A$ and set of edges $E(G)$, $(\phi_a)_{a\in A}$ are bounded functions with compact support and $(g_e)_{e\in E(G)}$ are even functions in $L^2\cap L^\infty$.\jump

The quantity $I_n(G)$, in the context of cumulants asymptotics, is somehow reminiscent of a theorem of Szegő (see \cite{Avr10} and the references therein), where this kind of integral received a thorough treatment and Hölder bounds that depends on the structure of the graph were given. It has been for instance used conjointly with diagram formulas and Wiener Chaos expansion techniques to prove the Gaussian asymptotics of non-linear functional of random measures, see for instance \cite{Pec11}.\jump

Nevertheless our setting is not exactly the same, and we were able to give a very short and self contained argument, that relies only on a basic interpolation inequality for Hölder norms, which proves a tight Hölder type bound for the quantity $I_n(G)$ when $G$ is $2$-edge connected. We recall the following fundamental theorem about Hölder inequality proved by Barthe in \cite[Sec. 2]{Bar2}, which is a particular instance of the Brascamp--Lieb inequality (a good survey reference is for instance \cite{Ben08}).
\begin{theorem}[Hölder interpolation]
\label{Theoremter4}
Let $m,n$ positive integers, and $v_1,\ldots,v_m$ be non-zeros vectors which span the Euclidean space $\R^n$. We denote by $Q$ the subset of $[0,1]^m$ such that $\underline{q}\in Q$ if there is a finite constant $C_{\underline{q}}$ such that for every measurable functions $\psi_1,\ldots,\psi_m$ from $\R$ to $\R$,
\[\int_{\R^n} \prod_{i=1}^m |\psi_i(\langle \x,v_i\rangle)|\dd \x \leq C_{\underline{q}} \prod_{i=1}^m \left(\int_\R |\psi_i(x)|^{1/q_i}\dd x\right)^{q_i}.\]
Then $Q$ is convex.
\end{theorem}
The above theorem implies the following theorem about the integral quantity $I_n(G)$. Recall that $p=|A|$.
\begin{lemma}
\label{Lemmater51}
Assume that for all $e\in E(G)$, $g_e \in L^\frac{p}{p-1}$. Then for every $e\in E(G)$, there is a number $p_e\geq p/(p-1)$ such that
\[\frac{1}{n}I_n(G)\leq \left(\prod_{a\in A}\|\phi_a\|_p\right)\left(\prod_{e\in E(G)}\|g_e\|_{p_e}\right).\]
Assume that $p\geq 3$ and $g_e \in L^2\cap L^\infty$. Then
\[\lim_{n\rightarrow +\infty} \frac{1}{n^{p/2}}I_n(G) = 0.\]
\end{lemma}
\begin{proof}
Let $(T_a)_{a\in A}$ be the family of spanning trees of $G$ given by Lemma \ref{Lemmater50}. For fixed index $a\in A$, the linear mapping
\[\x_A\longmapsto (x_a, (x_b-x_c)_{\{b,c\}\in E(T_a)})\]
is volume preserving. For $e\notin E(T_a)$ we bound the term $g_e(x_i-x_j)$ in $I_n(G)$ by $\|g_e\|_\infty$, and for $b\neq a$, the function $\phi_b$ by $\|\phi_b\|_\infty$. By a change of variable, we get
\[I_n(G) \leq n\|\phi_a\|_1\left(\prod_{b\neq a}\|\phi_b\|_\infty\right)\left(\prod_{e\in E(T_a)}\|g_e\|_1\right)\left(\prod_{e\notin E(T_a)}\|g_e\|_\infty\right).\]
This inequality is true for all $a\in A$. By Theorem \ref{Theoremter4}, one can interpolate this collection of Hölder inequalities indexed by the set $A$ and convex combination $(1/p,\ldots,1/p)$ to obtain
\[I_n(G) \leq Cn\left(\prod_{a\in A}\|\phi_a\|_p\right)\left(\prod_{e\in E(G)}\|g_e\|_{p_e}\right),\quad\text{with}\quad \frac{1}{p_e} = \frac{1}{p}\sum_{a\in A}\one_{E(T_a)}(e).\numberthis\label{eqter:59}\]
Since for all $e\in E(G)$, there is a tree $T_{a_e}$ that does not contain the edge $e$, one must have $p_e \geq \frac{p}{p-1}$, and the first part of the lemma follows. For the second part, note that we also have the crude bound
\[I_n(G)\leq n^p\left(\prod_{a\in A}\|\phi_a\|_1\right)\left(\prod_{e\in E(G)}\|g_e\|_{\infty}\right).\]
We can once again interpolate this inequality with inequality \eqref{eqter:59} and convex combination $\left(\frac{p}{2(p-1)}, \frac{p-2}{2(p-1)}\right)$ to get
\[\frac{1}{n^{p/2}}I_n(G)\leq C\left(\prod_{a\in A}\|\phi_a\|_2\right)\left(\prod_{e\in E(G)}\|g_e\|_{q_e}\right),\quad\text{with}\quad q_e = 2p_e\frac{p-1}{p}\geq 2.\numberthis\label{eqter:60}\]
It remains show that the left hand side of \eqref{eqter:60} converges towards zero for $p\geq 3$. If the functions $(g_e)_{e\in E(G)}$ are bounded and compactly supported, then inequality \eqref{eqter:59} implies the convergence towards zero of the left hand side of \eqref{eqter:60} when $p\geq 3$. In the general case, one can take, for every $e\in E(G)$, a sequence of bounded and compactly supported functions that converge towards $g_e$ in $L^{q_e}$. The Hölder bound given by \eqref{eqter:60} and the triangular inequality imply the desired result.
\end{proof}
The above Lemma \ref{Lemmater51} implies that the space of test functions $(\phi_a)_{a\in A}$ can be extended to $L^p(U)$. The previous Lemma \ref{Lemmater51} and the convergence of the Kac density given by Corollary \ref{Corter3} imply the following lemma.
\begin{lemma}
\label{Lemmater53}
For all $p\geq 3$,
\[\lim_{n\rightarrow+\infty}\;\frac{1}{n^{p/2}}\int_{(nU)^A}\underline{\phi}_A^\otimes\left(\frac{\x_A}{n}\right)F_{A,n}(\x_A)\dd\x_A=0.\]
For all $p\geq 1$, if $g_\omega \in L^\frac{p}{p-1}$ then
\[\lim_{n\rightarrow+\infty} \frac{1}{n}\int_{(nU)^A}\underline{\phi}_A^\otimes\left(\frac{\x_A}{n}\right)F_{A,n}(\x_A)\dd\x_A = \left(\int_{U} \prod_{a\in A}\phi_a(y)\dd y\right)\left(\int_{\R^{p-1}} F_{A,\infty}(0,\x)\dd \x\right).\]
\end{lemma}
\begin{proof}
According to Lemma \ref{Lemmater42}, there is a constant $C$ such that
\[\left|\int_{(nU)^A}\underline{\phi}_A^\otimes\left(\frac{\x_A}{n}\right)F_{A,n}(\x_A)\dd\x_A\right|\leq C\sum_{G\in \G_A} I_n(G),\]
where $I_n(G)$ is defined in $\eqref{eqter:61}$ with functions $g_e = g_\omega$. The first part of the corollary is an immediate consequence of the second part of Lemma \ref{Lemmater51}. Assume first that the functions $(\phi_a)_{a\in A}$ are continuous and compactly supported. In that case, pick $a_0\in A$. We define $y_{a_0} = 0$ and we make the change of variables
\[x_{a_0} = ny\quand \forall a\in A\setminus\{a_0\},\;\;x_a = ny + y_a.\]
Then we have the following uniform convergence for $y\in U$ and $\y_A$ in compact subsets of $\R^A$
\[\lim_{n\rightarrow+\infty} \phi_a\left(y + \frac{y_a}{n}\right) = \phi_a(y),\]
and according to Corollary \ref{Corter3},
\[\lim_{n\rightarrow+\infty} F_{A,n}\left(ny+\y_A\right) = F_{A,\infty}(\y_A).\]
The conclusion then follows from the dominated convergence theorem. In the general case, we consider for all $a\in A$ a sequence of continuous and compactly supported functions that converges towards $\phi_a$ in $L^p$. The Hölder bound given by Lemma \ref{Lemmater51} and another application of dominated convergence theorem imply the desired result.
\end{proof}
Given the expression of cumulants given by Proposition \ref{Propter02} and the previous Lemma \ref{Lemmater53}, we then deduce the following theorem concerning the convergence of cumulants associated with the linear statistics of the zeros counting measure of the sequence of processes $(f_n)_{n\in \oNN}$.  We define the Stirling number of the second kind
\[\left\lbrace\begin{matrix}
p \\ 
k
\end{matrix} \right\rbrace := \Card\enstq{\I\in \Pt_A}{|\I|=k}.\]
\begin{theorem}
\label{Theoremter5}
Let $p\geq 2$ and assume that the sequence of processes $(f_n)_{n\in \oNN}$ satisfies the hypotheses $H_1(q)$ and $H_2(q)$ with $q=2p-1$. Let $\phi\in L^1\cap L^{p^2}$. If $p\geq 3$ then
\[\lim_{n\rightarrow+\infty} \frac{1}{n^{p/2}}\kappa_p(\langle\nu_n,\phi\rangle) = 0.\]
Moreover when $g_\omega \in L^{\frac{p}{p-1}}$,
\[\lim_{n\rightarrow+\infty} \frac{1}{n}\kappa_p(\langle\nu_n,\phi\rangle) = \left(\int_U\phi^p(y)\dd y\right)\sum_{k=1}^p\left\lbrace\begin{matrix}
p \\ 
k
\end{matrix} \right\rbrace
\left(\int_{\R^{k-1}}\!\!\!F_{k,\infty}(0,\x)\dd \x\right).\]
\end{theorem}
\begin{proof}
Let $p\geq 3$. Recall from Proposition \ref{Propter02} that
\[\kappa_p(\langle\nu_n,\phi\rangle) = \sum_{\I\in\Pt_A} \int_{(nU)^\I}\left(\prod_{I\in\I}\phi\left(\frac{\x_I}{n}\right)^{|I|}\right) F_{\I,n}(\x_\I)\dd\x_\I.\]
Since $\phi\in L^1\cap L^{p^2}$, then for every partition $\I$ of $\{1,\ldots,p\}$ and $I$ a subset of $\I$ one has that the function $\phi^{|I|}$ is in $L^{|\I|}$. According to the previous Lemma \ref{Lemmater53}, one has
\[\frac{1}{n^{p/2}}\left|\kappa_p(\langle\nu,\phi\rangle)\right| \leq \sum_{\I\in\Pt_A} \frac{1}{n^{p/2}}\left|\int_{(nU)^\I}\left(\prod_{I\in\I}\phi\left(\frac{\x_I}{n}\right)^{|I|}\right) F_{\I,n}(\x_\I)\dd\x_\I\right|\underset{n\rightarrow+\infty}{\longrightarrow} 0,\]
which proves the first assertion. As for the second assertion, it is again a consequence of Lemma \ref{Lemmater53}, which implies that
\[\lim_{n\rightarrow+\infty} \frac{1}{n}\kappa_p(\langle\nu_n,\phi\rangle) = \left(\int_U\phi^p(y)\dd y\right)\sum_{\I\in\Pt_A} \left(\int_{\R^{|\I|-1}}\!\!\!F_{|\I|,\infty}(0,\x)\dd \x\right).\]
\end{proof}
The proof of the main Theorem \ref{Theoremter9} is a reformulation of the previous Theorem \ref{Theoremter5}, with
\[\forall p\geq 1,\quad\gamma_p = \sum_{k=1}^p\left\lbrace\begin{matrix}
p \\ 
k
\end{matrix} \right\rbrace
\left(\int_{\R^{k-1}}\!\!\!F_{k,\infty}(\x)\dd \x\right).\]
In particular, one has
\[\gamma_1 = \frac{1}{\pi}\sqrt{\frac{-r_\infty''(0)}{r_\infty(0)}}\quand \gamma_2 = \gamma_1 + \int_\R F_{2,\infty}(0,x)\dd x.\]
It has been shown for instance in \cite{Slu91} that under our assumptions on the process $f_\infty$, the constant $\gamma_2$ is positive, from which follows the central limit theorem for the linear statistic associated with the zeros counting measure.
\newpage
\printbibliography

@article{Bas20,
  title={Exponential concentration for zeroes of stationary Gaussian processes},
  author={Basu, Riddhipratim and Dembo, Amir and Feldheim, Naomi and Zeitouni, Ofer},
  journal={International Mathematics Research Notices},
  volume={2020},
  number={23},
  pages={9769--9796},
  year={2020},
  publisher={Oxford University Press}
}

@article{Bar2,
  title={On a reverse form of the Brascamp-Lieb inequality},
  author={Barthe, Franck},
  journal={Inventiones mathematicae},
  volume={134},
  number={2},
  pages={335--361},
  year={1998},
  publisher={Springer}
}

@article{Naz10,
  title={Correlation Functions for Random Complex Zeroes: Strong Clustering and Local Universality},
  author={Fedor Nazarov and Mikhail Sodin},
  journal={Communications in Mathematical Physics},
  year={2010},
  volume={310},
  pages={75-98}
}

@book {Pec11,
    AUTHOR = {Peccati, Giovanni and Taqqu, Murad S.},
     TITLE = {Wiener chaos: moments, cumulants and diagrams},
    SERIES = {Bocconi \& Springer Series},
    VOLUME = {1},
      NOTE = {A survey with computer implementation,
              Supplementary material available online},
 PUBLISHER = {Springer, Milan; Bocconi University Press, Milan},
      YEAR = {2011},
     PAGES = {xiv+274},
      ISBN = {978-88-470-1678-1},
   MRCLASS = {60H05 (05D40 60-08 60G15 60H30 65Cxx)},
  MRNUMBER = {2791919},
MRREVIEWER = {Sergey V. Lototsky},
       DOI = {10.1007/978-88-470-1679-8},
       URL = {https://doi.org/10.1007/978-88-470-1679-8},
}

@misc{Do21,
      title={Central Limit Theorem for the number of real roots of random orthogonal polynomials}, 
      author={Do, Yen and Nguyen, Hoi and Nguyen, Oanh and Pritsker, Igor},
      year={2021},
      eprint={2111.09015},
      archivePrefix={arXiv},
      primaryClass={math.PR}
}

@article{Ben08,
  title={The Brascamp--Lieb inequalities: finiteness, structure and extremals},
  author={Bennett, Jonathan and Carbery, Anthony and Christ, Michael and Tao, Terence},
  journal={Geometric and Functional Analysis},
  volume={17},
  number={5},
  pages={1343--1415},
  year={2008},
  publisher={Springer}
}

@article {Spe83,
    AUTHOR = {Speed, T. P.},
     TITLE = {Cumulants and partition lattices},
   JOURNAL = {Austral. J. Statist.},
  FJOURNAL = {The Australian Journal of Statistics},
    VOLUME = {25},
      YEAR = {1983},
    NUMBER = {2},
     PAGES = {378--388},
      ISSN = {0004-9581},
   MRCLASS = {62E10},
  MRNUMBER = {725217},
MRREVIEWER = {H. L. MacGillivray},
}

@article {Arm21,
    AUTHOR = {Armentano, D. and Aza\"{\i}s, J.-M. and Dalmao, F. and Le\'{o}n, J. R.},
     TITLE = {Central limit theorem for the number of real roots of
              {K}ostlan {S}hub {S}male random polynomial systems},
   JOURNAL = {Amer. J. Math.},
  FJOURNAL = {American Journal of Mathematics},
    VOLUME = {143},
      YEAR = {2021},
    NUMBER = {4},
     PAGES = {1011--1042},
      ISSN = {0002-9327},
   MRCLASS = {60G99 (12D10 60F05)},
  MRNUMBER = {4291248},
       DOI = {10.1353/ajm.2021.0026},
       URL = {https://doi.org/10.1353/ajm.2021.0026},
}

@article {Avr10,
    AUTHOR = {Avram, Florin and Leonenko, Nikolai and Sakhno, Ludmila},
     TITLE = {On a {S}zeg\H{o} type limit theorem, the
              {H}\"{o}lder-{Y}oung-{B}rascamp-{L}ieb inequality, and the
              asymptotic theory of integrals and quadratic forms of
              stationary fields},
   JOURNAL = {ESAIM Probab. Stat.},
  FJOURNAL = {ESAIM. Probability and Statistics},
    VOLUME = {14},
      YEAR = {2010},
     PAGES = {210--255},
      ISSN = {1292-8100},
   MRCLASS = {60F05 (60G10 60G15 60G60 62M10 62M15)},
  MRNUMBER = {2741966},
MRREVIEWER = {A. Ya. Olenko},
       DOI = {10.1051/ps:2008031},
       URL = {https://doi.org/10.1051/ps:2008031},
}

@article {Cuz75,
    AUTHOR = {Cuzick, Jack},
     TITLE = {Conditions for finite moments of the number of zero crossings
              for {G}aussian processes},
   JOURNAL = {Ann. Probability},
  FJOURNAL = {The Annals of Probability},
    VOLUME = {3},
      YEAR = {1975},
    NUMBER = {5},
     PAGES = {849--858},
      ISSN = {0091-1798},
   MRCLASS = {60G15},
  MRNUMBER = {388515},
MRREVIEWER = {Georg Lindgren},
       DOI = {10.1214/aop/1176996271},
       URL = {https://doi.org/10.1214/aop/1176996271},
}

@article {Boo05,
    AUTHOR = {de Boor, Carl},
     TITLE = {Divided differences},
   JOURNAL = {Surv. Approx. Theory},
  FJOURNAL = {Surveys in Approximation Theory},
    VOLUME = {1},
      YEAR = {2005},
     PAGES = {46--69},
   MRCLASS = {41A05 (41-02)},
  MRNUMBER = {2221566},
MRREVIEWER = {Peter Alfeld},
}

@article{Gas21,
      title={Variance of the number of zeros of dependent Gaussian trigonometric polynomials}, 
      author={Gass, Louis},
      year={2022},
 JOURNAL = {To appear in Proceedings of the American Mathematical Society},
}

@misc{Ngu19,
      title={Exponential concentration for the number of roots of random trigonometric polynomials}, 
      author={Nguyen, Hoi and Zeitouni, Ofer},
      year={2019},
      eprint={1912.12051},
      archivePrefix={arXiv},
      primaryClass={math.PR}
}

@article {Whi32,
    AUTHOR = {Whitney, Hassler},
     TITLE = {Non-separable and planar graphs},
   JOURNAL = {Trans. Amer. Math. Soc.},
  FJOURNAL = {Transactions of the American Mathematical Society},
    VOLUME = {34},
      YEAR = {1932},
    NUMBER = {2},
     PAGES = {339--362},
      ISSN = {0002-9947},
   MRCLASS = {05C10 (05C40)},
  MRNUMBER = {1501641},
       DOI = {10.2307/1989545},
       URL = {https://doi.org/10.2307/1989545},
}

@article {Mar49,
    AUTHOR = {Maruyama, Gisiro},
     TITLE = {The harmonic analysis of stationary stochastic processes},
   JOURNAL = {Mem. Fac. Sci. Ky\={u}sy\={u} Univ. A},
  FJOURNAL = {Memoirs of the Faculty of Science. Kyushu University. Series
              A. Mathematics},
    VOLUME = {4},
      YEAR = {1949},
     PAGES = {45--106},
      ISSN = {0373-6385},
   MRCLASS = {60.0X},
  MRNUMBER = {32127},
MRREVIEWER = {J. L. Doob},
}

@article {Ngu21,
    AUTHOR = {Nguyen, Oanh and Vu, Van},
     TITLE = {Random polynomials: {C}entral limit theorems for the real
              roots},
   JOURNAL = {Duke Math. J.},
  FJOURNAL = {Duke Mathematical Journal},
    VOLUME = {170},
      YEAR = {2021},
    NUMBER = {17},
     PAGES = {3745--3813},
      ISSN = {0012-7094},
   MRCLASS = {60F05 (12D10)},
  MRNUMBER = {4340724},
       DOI = {10.1215/00127094-2020-0089},
       URL = {https://doi.org/10.1215/00127094-2020-0089},
}

@article{Ang21,
      title={Variations on Salem--Zygmund results for random trigonometric polynomials. Application to almost sure nodal asymptotics}, 
      author={Angst, Jürgen and Poly, Guillaume},
      year={2019},
      eprint={1912.09928},
      archivePrefix={arXiv},
      primaryClass={math.PR},
      journal = {To appear in EJP}
}

@article{Aza13,
author = {Azaïs, Jean-Marc  and León, José},
title = {{CLT for crossings of random trigonometric polynomials}},
volume = {18},
journal = {Electronic Journal of Probability},
number = {none},
publisher = {Institute of Mathematical Statistics and Bernoulli Society},
pages = {1 -- 17},
keywords = {chaos expansion, Crossings of random trigonometric polynomials, Rice formula},
year = {2013},
doi = {10.1214/EJP.v18-2403},
URL = {https://doi.org/10.1214/EJP.v18-2403}
}

@article{Cuz76,
 ISSN = {00911798},
 URL = {http://www.jstor.org/stable/2243040},
 author = {Jack Cuzick},
 journal = {The Annals of Probability},
 number = {4},
 pages = {547--556},
 publisher = {Institute of Mathematical Statistics},
 title = {A Central Limit Theorem for the Number of Zeros of a Stationary Gaussian Process},
 volume = {4},
 year = {1976}
}

@article{Anc19,
  title={Random sections of line bundles over real Riemann surfaces},
  author={Ancona, Michele},
  journal={International Mathematics Research Notices},
  volume={2021},
  number={9},
  pages={7004--7059},
  year={2021},
  publisher={Oxford University Press}
}

@article{Anc20,
  title={Roots of Kostlan polynomials: moments, strong law of large numbers and central limit theorem},
  author={Ancona, Michele and Letendre, Thomas},
  journal={Annales Henri Lebesgue},
  volume={4},
  pages={1659--1703},
  year={2021}
}

@article {Anc21,
    AUTHOR = {Ancona, Michele and Letendre, Thomas},
     TITLE = {Zeros of smooth stationary {G}aussian processes},
   JOURNAL = {Electron. J. Probab.},
  FJOURNAL = {Electronic Journal of Probability},
    VOLUME = {26},
      YEAR = {2021},
     PAGES = {Paper No. 68, 81},
   MRCLASS = {60F05 (60F15 60F17 60F25 60G10 60G15 60G55 60G57)},
  MRNUMBER = {4262341},
       DOI = {10.1214/21-ejp637},
       URL = {https://doi.org/10.1214/21-ejp637},
}

@article {Ang19,
    AUTHOR = {Angst, J\"{u}rgen and Dalmao, Federico and Poly, Guillaume},
     TITLE = {On the real zeros of random trigonometric polynomials with
              dependent coefficients},
   JOURNAL = {Proc. Amer. Math. Soc.},
  FJOURNAL = {Proceedings of the American Mathematical Society},
    VOLUME = {147},
      YEAR = {2019},
    NUMBER = {1},
     PAGES = {205--214},
      ISSN = {0002-9939},
   MRCLASS = {42A05 (30C15)},
  MRNUMBER = {3876743},
       DOI = {10.1090/proc/14216},
       URL = {https://doi.org/10.1090/proc/14216},
}

@article{Ang21b,
  title={Real zeros of random trigonometric polynomials with dependent coefficients},
  author={Angst, J{\"u}rgen and Pautrel, Thibault and Poly, Guillaume},
  journal={Transactions of the American Mathematical Society},
  volume={375},
  number={10},
  pages={7209--7260},
  year={2022}
}

@book {Aza09,
    AUTHOR = {Aza\"{i}s, Jean-Marc and Wschebor, Mario},
     TITLE = {Level sets and extrema of random processes and fields},
 PUBLISHER = {John Wiley \& Sons, Inc., Hoboken, NJ},
      YEAR = {2009},
     PAGES = {xii+393},
      ISBN = {978-0-470-40933-6},
   MRCLASS = {60-02 (60E15 60G05 60G15 60G60 60G70)},
  MRNUMBER = {2478201},
MRREVIEWER = {Anna Amirdjanova},
       DOI = {10.1002/9780470434642},
       URL = {https://doi.org/10.1002/9780470434642},
}

@article {Gra11,
    AUTHOR = {Granville, Andrew and Wigman, Igor},
     TITLE = {The distribution of the zeros of random trigonometric
              polynomials},
   JOURNAL = {Amer. J. Math.},
  FJOURNAL = {American Journal of Mathematics},
    VOLUME = {133},
      YEAR = {2011},
    NUMBER = {2},
     PAGES = {295--357},
      ISSN = {0002-9327},
   MRCLASS = {60G99 (60F05)},
  MRNUMBER = {2797349},
       DOI = {10.1353/ajm.2011.0015},
       URL = {https://doi.org/10.1353/ajm.2011.0015},
}

@article {Slu91,
    AUTHOR = {Slud, Eric},
     TITLE = {Multiple {W}iener-{I}t\^{o} integral expansions for
              level-crossing-count functionals},
   JOURNAL = {Probab. Theory Related Fields},
  FJOURNAL = {Probability Theory and Related Fields},
    VOLUME = {87},
      YEAR = {1991},
    NUMBER = {3},
     PAGES = {349--364},
      ISSN = {0178-8051},
   MRCLASS = {60G15 (60G10 60H05)},
  MRNUMBER = {1084335},
MRREVIEWER = {Jerzy Szulga},
       DOI = {10.1007/BF01312215},
       URL = {https://doi.org/10.1007/BF01312215},
}

@article{Bla19,
  title={Limit theory for geometric statistics of point processes having fast decay of correlations},
  author={B{\l}aszczyszyn, Bart{\l}omiej and Yogeshwaran, Dhandapani and Yukich, Joseph E},
  journal={The Annals of Probability},
  volume={47},
  number={2},
  pages={835--895},
  year={2019},
  publisher={Institute of Mathematical Statistics}
}
\end{document}